\DeclareMathOperator*{\argmin}{arg\,min}
\DeclareMathOperator*{\ri}{ri}
\DeclareMathOperator*{\dom}{dom}
\DeclareMathOperator*{\epi}{epi}
\newtheorem{assumption}{Assumption}
\newcommand{\A}{{\mathcal A}}
\newcommand{\B}{{\mathcal B}}
\newcommand{\cL}{{\mathcal L}}
\newcommand{\M}{{\mathcal M}}
\newcommand{\N}{{\mathcal N}}
\newcommand{\cO}{{\mathcal O}}
\newcommand{\Q}{{\mathcal Q}}
\newcommand{\cS}{{\mathcal S}}
\newcommand{\T}{{\mathcal T}}
\newcommand{\U}{{\mathcal U}}
\newcommand{\X}{{\mathcal X}}
\newcommand{\Y}{{\mathcal Y}}
\newcommand{\Z}{{\mathcal Z}}
\newcommand{\ds}{\displaystyle}
\begin{document}
\journalname{Journal}
\title{A Note on the Convergence of ADMM for Linearly Constrained Convex Optimization Problems
\thanks{The research of the first author was supported by the China Scholarship Council while visiting the National University of Singapore and the National Natural Science Foundation of China (Grant No. 11271117).
The research of the second and the third authors was supported in part by the Ministry of Education, Singapore, Academic Research Fund (Grant No. R-146-000-194-112).}}
\titlerunning{Convergence of ADMM}
\author{Liang Chen\and Defeng Sun\and{Kim-Chuan Toh}}
\institute{Liang Chen
\at College of Mathematics and Econometrics, Hunan University, Changsha, 410082, China.\\
\email{chl@hnu.edu.cn}
\and
Defeng Sun
\at Department of Mathematics and Risk Management Institute, National University of Singapore, 10 Lower Kent Ridge Road, Singapore.\\
\email{matsundf@nus.edu.sg}
\and
Kim-Chuan Toh
\at Department of Mathematics, National University of Singapore, 10 Lower Kent Ridge Road, Singapore.\\
\email{mattohkc@nus.edu.sg}
}

\date{{\large July 8, 2015/} revised on \today}
\maketitle

\begin{abstract}
This note serves two purposes. Firstly, we construct a counterexample to show that the statement on the convergence of the alternating direction method of multipliers (ADMM) for solving  linearly constrained convex optimization problems in a highly influential paper by Boyd et al. [Found. Trends Mach. Learn. 3(1) 1-122 (2011)] can be false if no prior condition  on the existence of solutions to all the subproblems involved is assumed to hold.
Secondly, we present fairly mild conditions to guarantee the existence of solutions to all the subproblems and provide a rigorous convergence analysis on the ADMM, under a more general and useful semi-proximal ADMM (sPADMM) setting considered by Fazel et al. [SIAM J. Matrix Anal. Appl. 34(3) 946-977 (2013)], with a computationally more attractive large step-length that can even exceed the practically much preferred golden ratio of $(1+\sqrt{5})/2$.

\keywords{
Alternating direction method of multipliers (ADMM)
\and
Convergence
\and
Counterexample
\and
Large step-length
}
\subclass{65K05 \and 90C25 \and  90C46}
\end{abstract}

\section{Introduction}
\label{intro}

Let $\X$, $\Y$ and $\Z$ be three finite-dimensional real Euclidean spaces each endowed with an inner product $\langle\cdot,\cdot\rangle$ and its induced norm $\|\cdot\|$.
Let $f:\Y\to (-\infty,+\infty]$ and $g:\Z\to (-\infty,+\infty]$ be two closed proper convex functions  and
$\A:\X\to\Y$ and $\B:\X\to\Z$ be two linear maps.
Consider the following 2-block separable convex optimization problem:
\begin{equation}
\label{primal}
\min_{y\in\Y,z\in\Z}\big\{ f(y)+g(z) \quad \mbox{s.t.}\quad  \A^*y+\B^*z=c\big\},
\end{equation}
where $c\in\X$ is the given data and the linear maps $\A^*$ and  $\B^*$ are the adjoints of   $\A$ and $\B$, respectively.
The effective domains of $f$ and $g$  are denoted by $\dom\, f$ and $\dom\, g$, respectively.

Let $\sigma>0$ be a given  penalty parameter. The augmented Lagrangian function of problem \eqref{primal} is defined by, for any $(x,y,z)\in\X\times\Y\times\Z$,
\begin{equation}
\label{alagrangian}
\begin{array}{l}
\cL_{\sigma}(y,z;x):=f(y)+g(z)+\langle x ,\A^* y+\B^* z-c\rangle+\frac{\sigma}{2}\|\A^*y+\B^*z-c\|^2\, .
\end{array}
\end{equation}
Choose   an initial point  $(x^0,y^0,z^0)\in\X\times\dom\,  f\times \dom\, g$ and a step-length  $\tau\in (0,+\infty)$. The classical  alternating direction method of multipliers (ADMM) of  Glowinski and Marroco \cite{glo75} and  Gabay and Mercier \cite{Gabay:1976ff} then takes the following scheme for $k=0,1,\ldots$,
\begin{equation}
\label{admm}
\left\{
\begin{array}{l}
\ds y^{k+1}=\argmin_{y}\cL_{\sigma}(y,z^k; x^k),
\\[2mm]
\ds z^{k+1}=\argmin_{z}\cL_{\sigma}(y^{k+1},z; x^k),
\\[2mm]
\ds x^{k+1}=x^k+\tau\sigma(\A^*y^{k+1}+\B^*z^{k+1}-c).
\end{array}
\right.
\end{equation}

The  convergence analysis for the ADMM scheme \eqref{admm} under certain settings was  first conducted  by  Gabay and Mercier \cite{Gabay:1976ff}, Glowinski \cite{glo80book} and Fortin and Glowinski \cite{fortin83}.
One may refer to \cite{boyd11} and  \cite{eck12} for recent surveys on this topic and to \cite{globook14} for a note on the historical development of the ADMM.

In a highly influential paper\footnote{It has been cited $2,229$ times captured by Google Scholar as of July 8, 2015.} written by Boyd et al.  \cite{boyd11}, it was asserted  [Section 3.2.1, Page 17] that if $f$ and $g$ are closed proper convex functions \cite[Assumption 1]{boyd11}
and the Lagrangian function of problem (\ref{primal}) has a saddle point \cite[Assumption 2]{boyd11},  then the ADMM scheme \eqref{admm} converges for $\tau =1$.
This, however, turns to be false without imposing the prior condition that   all the subproblems involved have solutions. To demonstrate our claim,  in this note we shall provide a simple example (see Section \ref{sec:example}) with the following four nice properties:
\begin{itemize}
  \setlength\itemsep{0.1em}
  \item[(P1)] Both $f$ and $g$ are closed proper convex functions;
  \item[(P2)] The Lagrangian function has infinitely many saddle points;
  \item[(P3)] The Slater's constraint qualification (CQ) holds; and
  \item[(P4)] The linear operator $\B$ is nonsingular.
\end{itemize}

Note that our example to be constructed  satisfies  the two assumptions  made in \cite{boyd11}, i.e.,  (P1) and (P2), and the two additional favorable properties (P3) and (P4). Yet,
 the ADMM scheme \eqref{admm} even with $\tau =1$ may  not  be well-defined for solving   problem \eqref{primal}. A closer examination of the proofs given in \cite{boyd11} reveals that the authors mistakenly took for granted the existence of solutions to all the subproblems in (\ref{admm}) under (P1) and (P2) only. Here we will fix this gap by presenting fairly mild   conditions to guarantee the existence of solutions to all the  subproblems in (\ref{admm}).  Moreover, in order to deal with the potentially non-solvability issue of the subproblems in the ADMM scheme \eqref{admm}, we shall analyze  the convergence of the ADMM under a more useful semi-proximal ADMM (sPADMM) setting  advocated by Fazel et al. \cite{fazel13}, with a computationally more attractive large step-length that can even be bigger than  the golden ratio of $(1+\sqrt{5})/2$.

Let $\cS:\Y\to\Y$ and $\T:\Z\to\Z$ be two self-adjoint positive semidefinite linear operators.
Then the sPADMM takes  the following iteration scheme for $k=0,1,\ldots$, \begin{equation}
\label{spadmm}
\left\{
\begin{array}{l}
y^{k+1}=\argmin\limits_{y}\big\{\cL_{\sigma}(y,z^k;x^k)+\frac{1}{2}\|y-y^k\|^2_\cS\big\},
\\[2mm]
z^{k+1}=\argmin\limits_{z}\big\{\cL_{\sigma}(y^{k+1},z;x^k)+\frac{1}{2}\|z-z^k\|^2_\T\big\},
\\[3mm]
x^{k+1}=x^k+\tau\sigma(\A^*y^{k+1}+\B^*z^{k+1}-c).
\end{array}
\right.
\end{equation}
The sPADMM scheme \eqref{spadmm} with $\cS=0$ and $\T=0$ is nothing but the ADMM scheme \eqref{admm} and the case $\cS\succ0$ and $\T\succ 0$ was initiated by Eckstein \cite{eckstein1994}.
Most recent studies have shown that the sPADMM, a seemingly mild extension of the classical ADMM, turns out to play a pivotal role in solving multi-block convex composite conic programming problems \cite{chenl2015,lixd2014,yang2015} with a low to medium accuracy. For more details on choosing $\cS$ and $\T$, one may refer to the recent Ph.D thesis of Li \cite{lithesis}.

The remaining parts of this note are organized as follows. In Section \ref{preliminary}, we first present some necessary preliminary results from convex analysis for later discussions and then provide conditions under which the subproblems  in the sPADMM scheme \eqref{spadmm} are  solvable, or even admit bounded solution sets, so that this scheme is well-defined.
In Section \ref{sec:example}, based on several results established in Section \ref{preliminary}, we construct a counterexample that satisfies (P1)--(P4)  to show that the conclusion on the convergence of ADMM scheme (\ref{admm}) in \cite[Section 3.2.1]{boyd11} can be false without making further assumptions.
In Section \ref{sec:converge}, we establish some satisfactory convergence properties for the sPADMM scheme \eqref{spadmm} with a computationally more attractive large step-length that can  even exceed the golden ratio of  $(1+\sqrt{5})/2$, under fairly weak assumptions. We conclude this note in Section \ref{sec:conclusion}.

\section{Preliminaries}
\label{preliminary}

Let $\U$ be a finite dimensional real Euclidean space endowed with an inner product
 {$\langle\cdot,\cdot\rangle$ and its induced norm  $\|\cdot\|$}. Let  $\cO: \U\to\U$
be any self-adjoint positive semidefinite linear operator. For any $u,u'\in\U$, define $\langle u,u'\rangle_{\cO}:=\langle u,\cO u'\rangle$ and $\|u\|_\cO:=\sqrt{\langle u, \cO u\rangle}$ so that
\begin{equation}
\label{eq:triangle}
\begin{array}{l}
\langle u,u'\rangle_\cO=\frac{1}{2}\left(\|u\|_\cO^2+\|u'\|_\cO^2-\|u-u'\|_\cO^2\right)=\frac{1}{2}\left(\|u+u'\|_\cO^2-\|u\|_\cO^2-\|u'\|_\cO^2\right).
\end{array}
\end{equation}
For any given set $U\subseteq \U$, we denote its relative interior by $\ri(U)$ and define its indicator function $\delta_{U}:\U\to(-\infty,+\infty]$ by
$$
\delta_{U}(u):=\left\{
\begin{array}{ll}
0,&\mbox{if}\quad u\in U,\\
+\infty,\quad &\mbox{if}\quad u\not\in U.
\end{array}
\right.
$$
Let $\theta:\U\to(-\infty,+\infty]$ be a  closed proper convex function.  We use $\dom\,\theta$ and $\epi(\theta)$ to denote its effective domain and its epigraph, respectively.
Moreover, we use $\partial\theta(\cdot)$ to denote the subdifferential mapping \cite[Section 23]{rocbook} of $\theta(\cdot)$, which is defined  by
\begin{equation}
\label{def-sub-d}
\partial \theta(u):=\{v\in\U\,|\, \theta(u')\ge \theta(u)+\langle v,u'-u\rangle\  \forall\, u'\in\U\}, \quad \forall\, u\in\U.
\end{equation}
It holds that there exists a self-adjoint positive semidefinite linear operator $\Sigma_\theta:\U\to\U$ such that for any $u,u'$ with $v\in\partial\theta(u)$ and $v'\in\partial\theta(u')$,
\begin{equation}
\label{monotone}
\langle v-v',u-u'\rangle\ge\|u-u'\|^{2}_{\Sigma_\theta}.
\end{equation}
Since $\theta$ is closed, proper and convex, by \cite[Theorem 8.5]{rocbook} we know that the recession function \cite[Section 8]{rocbook} of $\theta$, denoted by $\theta0^+$, is a positively homogeneous closed proper convex function that can be written as,  for an arbitrary $u'\in\dom\, \theta$,
$$
\theta0^+(u)=\lim_{\rho\to+\infty}\frac{\theta(u'+\rho u)-\theta(u')}{\rho}, \quad \forall\, u\in\U.
$$
The Fenchel conjugate $\theta^{*}(\cdot)$ of $\theta$ is a closed proper   convex function defined by
$$
\theta^*(v):=\sup_{u\in\U}\big\{\langle u,v\rangle-\theta(u)\big\}, \quad \forall\,  v\in\U.
$$
Since $\theta$ is closed, by
\cite[Theorem 23.5]{rocbook} we know that
\begin{equation}
\label{subgradientchange}
v\in\partial\theta(u)
\Leftrightarrow
u\in\partial\theta^{*}(v).
\end{equation}
The dual of problem \eqref{primal} takes the form of
\begin{equation}
\label{dual}
\max_{x\in \X}\big\{h(x):=-f^*(-\A x)-g^*(-\B x)-\langle c,x\rangle\big\}.
\end{equation}
The Lagrangian function of problem \eqref{primal} is defined by
\begin{equation}
\label{lagrangian}
\begin{array}{ll}
\cL(y,z;x):=f(y)+g(z)+\langle x,\A^*y+\B^*z-c\rangle,
\quad\forall\, (y,z,x)\in\Y\times\Z\times\X,
\end{array}
\end{equation}
which is convex in $(y,z)\in \Y\times \Z$ and concave in $x\in \X$.
Recall that  we say the  Slater's  CQ  for problem (\ref{primal}) holds if
\[\bigl\{(y,z)\ |\ y\in\ri(\dom \,f),\ z\in\ri(\dom \,g),\ \A^{*}y+\B^{*}z=c\bigr\}\ne \emptyset.
\]
Under the above Slater's CQ, from \cite[Corollaries 28.2.2 \& 28.3.1]{rocbook} we know that
$(\bar y, \bar z)\in \dom\, f\times \dom\, g$ is a solution to problem \eqref{primal} if and only if there exists a Lagrangian multiplier $\bar x\in\X$ such that
$(\bar x, \bar y, \bar z)$ is a saddle point to the Lagrangian function \eqref{lagrangian}, or,  equivalently, $(\bar x, \bar y,\bar z)$ is a solution to the following Karush-Kuhn-Tucker (KKT) system
\begin{equation}\label{kkt}
-\A x\in\partial f( y),\quad
-\B x\in\partial g( z)\quad
\mbox{and}
\quad
\A^* y+\B^* z=c.
\end{equation}
Furthermore, if the solution set to the KKT system \eqref{kkt} is nonempty, by \cite[Theorem 30.4 \& Corollary 30.5.1]{rocbook} we know that
a vector $(\bar x,\bar y,\bar z)\in\X\times\Y\times\Z$ is a solution to \eqref{kkt} if and only if  $(\bar y, \bar z)$ is an optimal solution to problem \eqref{primal} and  $\bar x$ is an optimal solution to problem  \eqref{dual}.

In the following, we shall conduct  discussions on the existence of solutions to the subproblems in the sPADMM scheme \eqref{spadmm}.  Let the augmented Lagrangian function $\cL_\sigma$ be defined by (\ref{alagrangian}) and $\cS$ and $\T$ be two self-adjoint positive semi-definite linear operators used in the sPADMM scheme (\ref{spadmm}). Let $(x',y',z')\in\X\times\dom\,  f\times \dom\, g$ be an arbitrarily given point.  Consider the following two auxiliary optimization problems:
\begin{equation}
\label{fk}
\begin{array}{l}
\min_{y\in \Y}\big\{F(y):=\cL_{\sigma}(y,z';x')+\frac{1}{2}\|y-y'\|^2_\cS\big\}
\end{array}
\end{equation}
and
\begin{equation}
\label{gk}
\begin{array}{l}
\min_{z\in \Z}\big\{G(z):=\cL_{\sigma}(y',z;x')+\frac{1}{2}\|z-z'\|^2_\T\big\}.
\end{array}
\end{equation}
Note that Since $z'\in\dom\, g$,  problem \eqref{fk} is equivalent to
\begin{equation}
\label{unconstrained}
\begin{array}{l}
\min\limits_{y\in\Y} \big\{
\widehat F(y):=f(y)+\frac{\sigma}{2}\|\A^*y+(\B^*z'-c+{x'}/{\sigma})\|^2+\frac{1}{2}\|y-y'\|^2_\cS\big\}.
\end{array}
\end{equation}
We now study under what conditions    problems \eqref{fk} and \eqref{gk} are solvable or have bounded solution sets. For this purpose,
  we consider  the following  assumptions:
\begin{assumption}
\label{as-1}
$f0^+(y)>0$  for any $y\in\M$, where
$$
\M:=\{y\in\Y\,|\, \A^*y=0,\, \cS y=0\}\backslash\{y\in\Y\, |\, f0^+(-y) = -f0^+(y) = 0\}.
$$
\end{assumption}
\begin{assumption}
\label{as-11}
$g0^+(z)>0$ for any $z\in\N$, where
$$
\N:=\{z\in\Z\,|\, \B^*z=0,\, \T z=0\}\backslash\{z \in \Z\, |\, g0^+(-z)\\ = -g0^+(z) = 0\}.
$$
\end{assumption}
\begin{assumption}
\label{as-2}
$f0^+(y)>0$ for any $0\neq y\in\{y\in \Y\, |\, \A^*y=0, \cS y=0\}$.
\end{assumption}
\begin{assumption}
\label{as-22}
$g0^+(z)>0$ for any $0\neq z\in\{z\in \Z\, |\, \B^*z=0, \T z=0\}$.
\end{assumption}

Note that Assumptions \ref{as-1}-\ref{as-22} are not very restrictive. For example, if both $f$ and $g$ are coercive, in particular if they are norm functions,   all the four assumptions hold automatically without any other conditions. Under the above assumptions,  we have the following results.

\begin{proposition}
\label{theorem:well-defined}
It holds that
\begin{description}
\item[{\rm\bf(a)}]
 Problem $\eqref{fk}$ is solvable if
 Assumption $\ref{as-1}$ holds, and problem $\eqref{gk}$ is solvable if
 Assumption $\ref{as-11}$ holds.
\item[{\rm\bf(b)}]
 The solution set to problem $\eqref{fk}$ is nonempty and  bounded if and only if
Assumption $\ref{as-2}$ holds, and the solution set to problem $\eqref{gk}$ is nonempty and  bounded
if and only if Assumption $\ref{as-22}$ holds.
\end{description}
\end{proposition}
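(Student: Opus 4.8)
The plan is to reduce both subproblems to the unconstrained forms already recorded in the excerpt and then read off solvability and boundedness from the recession function. Since $z'\in\dom\, g$, problem \eqref{fk} is equivalent to minimizing $\widehat F$ in \eqref{unconstrained}, and $\widehat F$ is closed proper convex (the sum of the closed proper convex $f$ with two finite-valued convex quadratics). Writing $b:=\B^*z'-c+x'/\sigma$ and $\K:=\{y\in\Y\,|\,\A^*y=0,\ \cS y=0\}$, the first step is to compute $\widehat F0^+$. By the additivity of recession functions for finite sums of closed proper convex functions with a common relative-interior domain point \cite[Theorem 9.3]{rocbook} (here the quadratics are finite-valued, so the qualification reduces to $\ri(\dom\, f)\neq\emptyset$), one gets $\widehat F0^+=f0^++q_10^++q_20^+$ with $q_1(y)=\tfrac{\sigma}{2}\|\A^*y+b\|^2$ and $q_2(y)=\tfrac12\|y-y'\|_\cS^2$. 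A direct evaluation of the defining limit shows that a convex quadratic $y\mapsto\tfrac12\|My+p\|^2$ has recession function equal to the indicator function of $\ker M$; hence $q_10^+=\delta_{\{y\,|\,\A^*y=0\}}$ and $q_20^+=\delta_{\{y\,|\,\cS y=0\}}$. Therefore $\widehat F0^+(y)=f0^+(y)$ for $y\in\K$ and $\widehat F0^+(y)=+\infty$ otherwise. The analogous identity holds for the unconstrained objective $\widehat G$ associated with \eqref{gk}, with $f,\A,\cS,\K$ replaced by $g,\B,\T,\N$.

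For part (b) I would invoke the standard recession criterion: a closed proper convex function $h$ attains its infimum on a nonempty bounded set if and only if $h0^+(y)>0$ for every $y\neq0$ \cite[Theorem 27.1(b)]{rocbook}. Applied to $\widehat F$ together with the identity above, the requirement $\widehat F0^+(y)>0$ for all $y\neq0$ holds automatically off $\K$ (where $\widehat F0^+=+\infty$) and reduces on $\K$ to $f0^+(y)>0$ for all $0\neq y\in\K$, which is exactly Assumption \ref{as-2}. Both implications are then immediate, and the same argument applied to $\widehat G$ gives the equivalence for \eqref{gk} under Assumption \ref{as-22}.

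For part (a) the key observation is that Assumption \ref{as-1} is precisely the statement that the recession cone $R:=\{y\,|\,\widehat F0^+(y)\le0\}$ coincides with the lineality (constancy) space $L:=\{y\,|\,\widehat F0^+(y)=\widehat F0^+(-y)=0\}$ of $\widehat F$. Indeed, since $\K$ is a subspace one has $R=\{y\in\K\,|\,f0^+(y)\le0\}$ and $L=\K\cap\{y\,|\,f0^+(-y)=-f0^+(y)=0\}$, and $L\subseteq R$ always; the reverse inclusion $R\subseteq L$ is equivalent to $f0^+(y)>0$ for every $y\in\M=\K\setminus\{y\,|\,f0^+(-y)=-f0^+(y)=0\}$, i.e. to Assumption \ref{as-1}. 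Once $R=L$ is known, attainment follows by quotienting out the lineality: because $\widehat F0^+(\pm y)=0$ for $y\in L$, the function $\widehat F$ is constant along every line with direction in $L$, hence factors as $\widehat F(y)=\widetilde F(P_{L^\perp}y)$ for a closed proper convex $\widetilde F$ on $L^\perp$. The recession cone of $\widetilde F$ is $R\cap L^\perp=L\cap L^\perp=\{0\}$, so $\widetilde F0^+(w)>0$ for all $w\neq0$ and, by \cite[Theorem 27.1(b)]{rocbook} again, $\widetilde F$ attains its infimum; lifting a minimizer back shows $\widehat F$, and hence problem \eqref{fk}, is solvable. The same reasoning with $\widehat G$, $\N$ and Assumption \ref{as-11} settles \eqref{gk}.

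I expect the main obstacle to be the recession computation of the first paragraph: verifying that the additivity rule genuinely applies (which is why we record that the quadratics are finite-valued, so the relative-interior qualification collapses to $\ri(\dom\, f)\neq\emptyset$) and that each quadratic contributes the indicator of a kernel. The second delicate point is the bookkeeping in part (a) that identifies Assumption \ref{as-1} with the equality $R=L$, using that $\K$ is a subspace so that $y\in\K\iff-y\in\K$. After these, the quotient-by-lineality argument and the two appeals to the bounded-minimizer criterion are routine, and part (b) needs nothing beyond the recession identity and a single citation.
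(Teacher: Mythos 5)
Your proof is correct, and both the preliminary recession computation and part (b) follow essentially the same path as the paper: the paper likewise derives $\widehat F0^+(y)=f0^+(y)$ on $\{y\,|\,\A^*y=0,\ \cS y=0\}$ and $\widehat F0^+(y)=+\infty$ elsewhere from \cite[Theorem 9.3]{rocbook} and the quadratic example of \cite[pp.~67--68]{rocbook}, and then settles part (b) by \cite[Theorem 27.2]{rocbook} for sufficiency and \cite[Corollary 8.7.1]{rocbook} for necessity (your single ``iff'' citation packages the same two facts). Where you genuinely diverge is part (a). The paper's route is dual: Assumption \ref{as-1} makes $\widehat F0^+$ strictly positive outside the constancy directions, so \cite[Corollary 13.3.4(b)]{rocbook} gives $0\in\ri(\dom\widehat F^*)$, whence $\partial\widehat F^*(0)\neq\emptyset$ by \cite[Theorem 23.4]{rocbook}, and inverting via \eqref{subgradientchange} produces a minimizer of \eqref{unconstrained}. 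Your route is primal: you identify Assumption \ref{as-1} with the equality of the recession cone $R$ of $\widehat F$ and its constancy space $L$ (the bookkeeping here is right, and does use that $\{y\,|\,\A^*y=0,\ \cS y=0\}$ is a subspace), quotient out $L$ using the standard fact that $\widehat F0^+(\pm y)\le 0$ forces constancy along $y$ \cite[Section 8]{rocbook}, and apply the bounded-minimizer criterion to the reduced function on $L^\perp$, whose recession cone is $R\cap L^\perp=\{0\}$. Both arguments are sound; the paper's is shorter once the conjugate-domain characterization is cited as a black box, while yours avoids Fenchel conjugates entirely, makes the geometric content of Assumption \ref{as-1} (every recession direction is a constancy direction) explicit, and in effect reproves the attainment criterion that the paper imports.
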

\begin{proof}
{\bf (a)}
We first show that when  Assumption \ref{as-1} holds, the solution set to problem \eqref{fk} is not empty. 
Consider the recession function $\widehat F0^+$ of $\widehat F$.
On the one hand, by using \cite[Theorem 9.3]{rocbook} and the second example given in \cite[Pages 67-68]{rocbook}, we know that for any $y\in\Y$ such that $\A^*y\neq 0$ or $\cS y\neq 0$, one must have  $\widehat F0^+(y)=+\infty$.
On the other hand, for any $y\in\Y$ such that $\A^*y=0$ and $\cS y=0$, by the definition of $\widehat{F}(y)$ in \eqref{unconstrained} we have \[
\widehat F0^+(y)=f0^+(y) +\langle \sigma \A (\B^*z'-c+{x'}/{\sigma})- \cS y^\prime, y\rangle =f0^+(y).
\]
Hence, by Assumption \ref{as-1} we know that $\widehat F0^+(y)>0$ for all $y\in\Y $ except for those satisfying
$\widehat F0^+(-y)=-\widehat F0^+(y)=0$. Then, from  \cite[(b) in Corollary 13.3.4]{rocbook},  it holds that  $0\in\ri(\dom\, \widehat F^*)$. Furthermore, by \cite[Theorem 23.4]{rocbook} we know that $\partial\widehat F^*(0)$ is a nonempty set, i.e., there exists a $\hat y\in\Y$ such that $\hat y\in\partial\widehat F^*(0)$.
By noting that  $\widehat F$ is closed and using    \eqref{subgradientchange}, we then have $0\in\partial\widehat F(\hat y)$,
which implies that $\hat y$ is the solution to problem \eqref{unconstrained} hence to problem \eqref{fk}.

By repeating the above discussions we know that problem \eqref{gk} is also solvable if Assumption \ref{as-11} holds.

\medskip
{\bf (b)} Note that problem \eqref{unconstrained} is equivalent to problem \eqref{fk}. By reorganizing the proofs for part (a),  we can see that
Assumption \ref{as-2} holds if and only if $\widehat {F} 0^+ (y) >0$ for all $0\ne y\in \Y$. As a result, if Assumption \ref{as-2} holds, from \cite[Theorem 27.2]{rocbook} we know that problem \eqref{unconstrained} has a nonempty and bounded solution set. Conversely, if the solution set to problem \eqref{unconstrained} is nonempty and bounded, by \cite[Corollary 8.7.1]{rocbook} we know that there does not exist any $0\neq y\in\Y$ such that  $\widehat F0^{+}(y)\le0$, so that Assumption \ref{as-2} holds.
Similarly, we can prove the remaining results of part (b). This completes the proof of the proposition.
\qed
\end{proof}

Based on Proposition \ref{theorem:well-defined} and its proof, we have the following results.
\begin{corollary}
\label{coro2}
If   problem $\eqref{primal}$ has a nonempty and  bounded solution set, then both problems $\eqref{fk}$ and $\eqref{gk}$ have nonempty and  bounded solution sets.
\end{corollary}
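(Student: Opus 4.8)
The plan is to reduce the claim to the characterizations already obtained in Proposition \ref{theorem:well-defined}(b). Since that proposition tells us that problems \eqref{fk} and \eqref{gk} have nonempty and bounded solution sets \emph{precisely} when Assumptions \ref{as-2} and \ref{as-22} hold, it suffices to show that the nonemptiness and boundedness of the solution set to the primal problem \eqref{primal} forces both of these assumptions to be valid. The bridge between the two is the recession calculus of Rockafellar \cite{rocbook}, exactly as exploited in the proof of Proposition \ref{theorem:well-defined}.

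First I would write the primal objective as the extended-valued closed proper convex function $\Theta(y,z):=f(y)+g(z)+\delta_C(y,z)$ on $\Y\times\Z$, where $C:=\{(y,z)\,|\,\A^*y+\B^*z=c\}$. Because \eqref{primal} is assumed to have a nonempty solution set, $\Theta$ is proper, so the sum rule for recession functions \cite[Theorem 9.3]{rocbook}, together with the product-space computation invoked on \cite[Pages 67-68]{rocbook}, yields
\[
\Theta0^+(y,z)=f0^+(y)+g0^+(z)+\delta_{C_\infty}(y,z),\qquad C_\infty:=\{(y,z)\,|\,\A^*y+\B^*z=0\}.
\]
Applying \cite[Corollary 8.7.1]{rocbook} to the nonempty and bounded solution set of \eqref{primal}, and noting that $\Theta0^+$ is $+\infty$ off $C_\infty$, I then obtain the key inequality: $f0^+(y)+g0^+(z)>0$ for every nonzero pair $(y,z)$ with $\A^*y+\B^*z=0$.

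With this inequality in hand, Assumptions \ref{as-2} and \ref{as-22} follow by specialization. For Assumption \ref{as-2}, take any $0\neq y$ with $\A^*y=0$ and $\cS y=0$; then $(y,0)$ is a nonzero element of $C_\infty$, and since the recession function satisfies $g0^+(0)=0$ (immediate from its limit definition), the key inequality gives $f0^+(y)>0$. Symmetrically, for Assumption \ref{as-22} I would test the pair $(0,z)$ for any $0\neq z$ with $\B^*z=0$ and $\T z=0$, using $f0^+(0)=0$ to conclude $g0^+(z)>0$. Invoking Proposition \ref{theorem:well-defined}(b) then delivers both conclusions at once.

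The only delicate point is the recession-function decomposition of $\Theta$: one must verify that the sum rule applies (guaranteed by properness of $\Theta$, i.e. solvability of \eqref{primal}) and that the partial functions $(y,z)\mapsto f(y)$ and $(y,z)\mapsto g(z)$ on the product space genuinely have recession functions $f0^+(y)$ and $g0^+(z)$. Both facts are standard and were already used implicitly in the proof of Proposition \ref{theorem:well-defined}; once they are secured the remainder is a routine specialization, so I expect no serious obstacle beyond this bookkeeping.
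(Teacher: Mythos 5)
Your proposal is correct and takes essentially the same route as the paper: both arguments deduce Assumptions \ref{as-2} and \ref{as-22} from the nonemptiness and boundedness of the solution set of \eqref{primal} and then invoke part (b) of Proposition \ref{theorem:well-defined}. The paper simply asserts the key implication (no nonzero $y$ with $\A^*y=0$ and $f0^+(y)\le 0$, and symmetrically for $g$) without proof, whereas you supply the underlying recession-function computation via $\Theta0^+$ and \cite[Corollary 8.7.1]{rocbook}, which is exactly the justification the authors leave implicit.
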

\begin{proof}
Since  problem \eqref{primal} has a nonempty and bounded solution set, there  does not exist any $0\ne y\in\Y$ with $\A^*y=0$ such that $f0^+(y)\le 0$, or $0\ne z\in\Z$ with $\B^*z=0$ such that $g0^+(z)\le 0$.
 Thus, Assumptions  \ref{as-2} and \ref{as-22} hold.
 Then, by part (b) in Proposition \ref{theorem:well-defined} we know that the conclusion of  Corollary \ref{coro2} holds.
\qed
\end{proof}

\begin{proposition}
\label{corpolyhedral}
If $f$ {\rm(}or $g${\rm)} is a closed proper piecewise linear-quadratic convex function \cite[Definition 10.20]{va}, especially a polyhedral convex function,
we can replace the ``\,$>$'' in Assumption $\ref{as-1}\, (\, or\  \ref{as-11}\, )$ by ``\,$\ge$'' and the corresponding sufficient condition in part $($a\,$)$ of Proposition $\ref{theorem:well-defined}$ is also necessary.
\end{proposition}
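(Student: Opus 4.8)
The plan is to run the same argument as in the proof of Proposition~\ref{theorem:well-defined}(a), but to exploit the piecewise linear-quadratic (PLQ) structure in order to replace the use of the \emph{relative interior} of $\dom\widehat F^*$ (which forced the strict inequality) by the \emph{full} effective domain. As in \eqref{unconstrained} I would work with $\widehat F$, which is equivalent to problem \eqref{fk}. Since $f$ is a closed proper convex PLQ function and the two remaining terms $\frac{\sigma}{2}\|\A^*y+(\B^*z'-c+x'/\sigma)\|^2$ and $\frac12\|y-y'\|_\cS^2$ are convex quadratics that are finite on all of $\Y$, the function $\widehat F$ is again closed proper convex PLQ; hence by piecewise linear-quadratic duality \cite[Theorem 11.14]{va} its conjugate $\widehat F^*$ is PLQ as well, and in particular $\dom\widehat F^*$ is polyhedral and therefore closed. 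I would also reuse the recession identity already established in the proof of Proposition~\ref{theorem:well-defined}(a): $\widehat F0^+(y)=+\infty$ whenever $\A^*y\ne0$ or $\cS y\ne0$, while $\widehat F0^+(y)=f0^+(y)$ on $\{y:\A^*y=0,\ \cS y=0\}$. Consequently the variant of Assumption~\ref{as-1} with ``$>$'' weakened to ``$\ge$'' is exactly the statement that $\widehat F0^+(y)\ge0$ for every $y\in\Y$, because on the excluded set $\{y:f0^+(-y)=-f0^+(y)=0\}$ one trivially has $f0^+(y)=0$.

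The necessity direction needs no PLQ structure. If $\hat y$ solves \eqref{fk}, hence minimizes $\widehat F$, then for every direction $y\in\Y$ the difference quotient $\rho^{-1}\big(\widehat F(\hat y+\rho y)-\widehat F(\hat y)\big)$ is nonnegative for all $\rho>0$, and since it increases to $\widehat F0^+(y)$ as $\rho\to+\infty$ we obtain $\widehat F0^+(y)\ge0$. By the reformulation above, this is precisely the ``$\ge$'' version of Assumption~\ref{as-1}.

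For the sufficiency direction, which is where the PLQ hypothesis is essential, I would assume $\widehat F0^+(y)\ge0$ for all $y\in\Y$. Since $\widehat F0^+$ is the support function of $\dom\widehat F^*$ \cite[Theorem 13.3]{rocbook}, this is equivalent to $0\in\operatorname{cl}(\dom\widehat F^*)$, and because $\dom\widehat F^*$ is polyhedral, hence closed, it already yields $0\in\dom\widehat F^*$. The crucial point is then that a closed proper convex PLQ function is subdifferentiable at \emph{every} point of its effective domain --- for the polyhedral special case this is \cite[Theorem 23.10]{rocbook}, and the general PLQ case follows from \cite{va}; applied to $\widehat F^*$ it gives $\partial\widehat F^*(0)\ne\emptyset$. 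Picking any $\hat y\in\partial\widehat F^*(0)$ and invoking \eqref{subgradientchange} together with the closedness of $\widehat F$ yields $0\in\partial\widehat F(\hat y)$, so that $\hat y$ solves \eqref{unconstrained} and hence \eqref{fk}; the argument for \eqref{gk} and Assumption~\ref{as-11} is identical. The main obstacle is thus not the convex-analytic bookkeeping, which mirrors Proposition~\ref{theorem:well-defined}, but the verification of the PLQ calculus: that $\widehat F$ inherits the PLQ property, that the conjugate of a PLQ function has a closed (polyhedral) domain, and --- most importantly --- that PLQ functions are subdifferentiable on the whole of their domains rather than only on the relative interior. It is exactly this last fact that lets us strengthen ``$0\in\ri(\dom\widehat F^*)$'' (the engine behind the strict inequality in the original Assumption~\ref{as-1}) to the boundary-inclusive ``$0\in\dom\widehat F^*$,'' thereby both weakening ``$>$'' to ``$\ge$'' and making the condition necessary.
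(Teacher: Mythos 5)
Your proposal is correct and follows essentially the same route as the paper's proof: both reduce to $\widehat F$, invoke the piecewise linear-quadratic duality of \cite[Theorem 11.14]{va} to get that $\dom\widehat F^*$ is polyhedral (hence closed), use \cite[Corollary 13.3.4(a)]{rocbook} to translate the ``$\ge$'' version of Assumption \ref{as-1} into $0\in\dom\widehat F^*$, and then use the subdifferentiability of PLQ functions on their entire effective domain \cite[Proposition 10.21]{va} to conclude. You merely spell out explicitly the recession-function bookkeeping and the necessity direction that the paper compresses into ``by repeating the discussions for part (a).''
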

\begin{proof}
Note that when $f$ is a closed piecewise linear-quadratic convex function, the function $\widehat F$ defined in \eqref{unconstrained} is a piecewise linear-quadratic convex function with $\dom \widehat F= \dom f$ being a closed convex polyhedral set. Then by \cite[Theorem 11.14(b)]{va} we know that $\widehat F^*$ is also a piecewise linear-quadratic convex function whose effective domain is a closed convex polyhedral set. By repeating the discussions for part (a) of  Proposition \ref{theorem:well-defined} and using \cite[Corollary 13.3.4, (a)]{rocbook} we can obtain that
Assumption \ref{as-1}  with ``$>$" being replaced by ``\,$\ge$'' holds if and only if
$0\in\dom\, \widehat F^*$, or $\partial\widehat F^*(0)$ is a nonempty set \cite[Proposition 10.21]{va}, which is equivalent to the fact that $\argmin \widehat F$ is a nonempty set.
If $g$ is piecewise linear-quadratic we can get a similar result.
\qed
\end{proof}

Finally, we need the following easy-to-verify  result on the convergence of quasi-Fej\'er monotone sequences.

\begin{lemma}
\label{lemma:sq-sum}  Let $\{a_k\}_{k\ge0}$ be a nonnegative sequence of real numbers satisfying   $a_{k+1}\le a_k+\varepsilon_k$ for all $k\ge 0$,  where $\{\varepsilon_k\}_{k\ge 0}$ is  a nonnegative and summable sequence of real numbers.
Then the  quasi-Fej\'er monotone sequence $\{a_k\}$ converges to a unique limit point.
\end{lemma}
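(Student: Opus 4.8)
The plan is to reduce the claim to the ordinary monotone convergence theorem for real sequences by constructing an auxiliary sequence that is genuinely nonincreasing. Since $\{\varepsilon_k\}_{k\ge 0}$ is summable, the tail sums $t_k:=\sum_{j\ge k}\varepsilon_j$ are all finite and satisfy $t_k\to 0$ as $k\to\infty$. The key step is to absorb these tails into $\{a_k\}$: I would set $b_k:=a_k+t_k$ for each $k\ge 0$.

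First I would verify that $\{b_k\}$ is nonincreasing. Combining the hypothesis $a_{k+1}\le a_k+\varepsilon_k$ with the telescoping identity $t_{k+1}=t_k-\varepsilon_k$ gives
\[
b_{k+1}=a_{k+1}+t_{k+1}\le (a_k+\varepsilon_k)+(t_k-\varepsilon_k)=a_k+t_k=b_k,
\]
so the sequence decreases monotonically. Next, since $a_k\ge 0$ and $t_k\ge 0$ for every $k$, we have $b_k\ge 0$, hence $\{b_k\}$ is bounded below. By the monotone convergence theorem, $\{b_k\}$ therefore converges to a limit $b^{\ast}\ge 0$, which is unique.

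Finally I would transfer this back to $\{a_k\}$. Writing $a_k=b_k-t_k$ and using $t_k\to 0$, the sequence $\{a_k\}$ converges to the same limit $b^{\ast}$, and the limit of a convergent real sequence is unique. This completes the argument. I do not anticipate any genuine obstacle here; the only point deserving a word of care is that the summability of $\{\varepsilon_k\}$ is precisely what guarantees both the finiteness of the tails $t_k$ and their convergence to zero, and thus supports both the construction of $\{b_k\}$ and the final passage to the limit.
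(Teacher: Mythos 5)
Your proof is correct. The paper itself states this lemma without proof, describing it only as ``easy-to-verify,'' and your argument---absorbing the tails $t_k=\sum_{j\ge k}\varepsilon_j$ into the sequence so that $b_k:=a_k+t_k$ is nonincreasing and bounded below, then letting $t_k\to 0$---is exactly the standard verification one would supply; there are no gaps.
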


\section{A Counterexample}
\label{sec:example}
In this section, we shall provide an example that satisfies all the properties (P1)-(P4) stated in Section \ref{intro} to show that the solution set to a certain subproblem in the ADMM scheme \eqref{admm} can be empty if no further assumptions on $f$, $g$ or $\A$  are made.
This means that     the convergence analysis for the  ADMM stated in \cite{boyd11} can be false.
The construction of this example relies  on  Proposition \ref{theorem:well-defined}.
The parameter $\sigma$ and the initial point $(x^{0},y^{0},z^{0})$ in the counterexample are just selected for the convenience of computations and one can construct similar  examples for arbitrary penalty parameters and  initial points.

We now present this example, which is a 3-dimensional 2-block convex optimization problem.

\medskip
\noindent
\fbox{
\begin{minipage}{0.97\textwidth}
\begin{example}
Let $\delta_{\ge 0}(\cdot)$ be the indicator function of the nonnegative real numbers. Consider problem \eqref{primal} with $f(y_{1},y_{2}):=\max(e^{-y_1}+y_2,y_2^2)$, $g(z):=\delta_{\ge 0}(z)$, $\A^{*}=(0,1)$, $\B^{*}=-1$,  and $c=2$, i.e.,
\begin{equation}
\label{problem}
\min_{(y_1,y_2,z)\in\Re^{3}}\Big\{\max(e^{-y_1}+y_2,y_2^2)+\delta_{\ge 0}(z)\ \, |\ \, 0y_{1}+y_2-z=2\Big\}.
\end{equation}
\end{example}
\end{minipage}}

\medskip
In this example,   $f$ and $g$ are closed proper convex functions with $\ri(\dom\, f)=\dom\, f=\Re^2$ and $\ri(\dom\, g)=\{z\, |\, z>0\}\subset \dom\, g $.
The vector $(0,3,1)\in \Re^3$ lies in $\ri(\dom\, f)\times\ri (\dom\, g)$ and satisfies the constraint in problem \eqref{problem}. Hence,  for problem \eqref{problem}, the Slater CQ holds.
It is easy to check  that the optimal solution set to problem \eqref{problem} is given by
$$\{(y_1,y_2,z)\in \Re^3\, |\,  y_1\ge-\log_e 2,\ y_2=2, \ z=0\}$$ and the corresponding optimal objective value is $4$.
The Lagrangian function of problem \eqref{problem} is given by
$$
\cL(y_1,y_2,z;x)=\max(e^{-y_1}+y_2,y_2^2)+\delta_{\ge 0}(z)+x(y_2-z-2), \   \forall\, (y_1,y_2,z,x)\in \Re^4\, .
$$
We now compute the dual of problem \eqref{problem} based on this Lagrangian function.
\begin{lemma}
\label{dualobj}
The objective function of the dual   of  problem $\eqref{problem}$ is given by
$$
h(x)=
\left\{
\begin{array}{ll}
-x^2/4-2x, \quad & \mbox{\rm if}\quad x\in(-\infty,-2),
\\[1mm]
1-x, & \mbox{\rm if}\quad x\in[-2,-1),
\\[1mm]
-2x, &\mbox{\rm if}\quad x\in[-1,0],
\\[1mm]
-\infty, & \mbox{\rm if}\quad x\in(0+\infty).
\end{array}
\right.
$$
\end{lemma}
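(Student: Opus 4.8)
The plan is to compute $h$ directly from its definition \eqref{dual}, namely $h(x)=-f^*(-\A x)-g^*(-\B x)-\langle c,x\rangle$, by first identifying the adjoints and then evaluating the two Fenchel conjugates. Since $\A^*=(0,1)$ and $\B^*=-1$, the adjoint maps are $\A x=(0,x)$ and $\B x=-x$, so that with $c=2$ one obtains $h(x)=-f^*(0,-x)-g^*(x)-2x$. The conjugate of $g=\delta_{\ge 0}$ is immediate: $g^*(x)=\sup_{z\ge 0}xz$ equals $0$ when $x\le 0$ and $+\infty$ when $x>0$. This already accounts for the branch $h(x)=-\infty$ on $(0,+\infty)$ and reduces the remaining task, on $x\le 0$, to computing $f^*(0,-x)$.

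The main work is to evaluate $f^*(0,t)=\sup_{y_1,y_2}\{ty_2-\max(e^{-y_1}+y_2,y_2^2)\}$ for $t=-x\ge 0$. I would split the supremum as $\sup_{y_2}\{ty_2-\phi(y_2)\}$, where $\phi(y_2):=\inf_{y_1}\max(e^{-y_1}+y_2,y_2^2)$ is the inner minimization over $y_1$. The key observation is that $e^{-y_1}+y_2$ decreases to $y_2$ as $y_1\to+\infty$ (without attaining it), while $y_2^2$ does not depend on $y_1$; hence $\phi(y_2)=y_2^2$ whenever $y_2^2\ge y_2$, i.e. for $y_2\le 0$ or $y_2\ge 1$, and $\phi(y_2)=y_2$ when $0\le y_2\le 1$. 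The non-attainment of the infimum is harmless here because $f^*$ is itself a supremum, so only the value of $\phi$, not its attainment, enters the computation.

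It then remains to maximize the univariate concave function $ty_2-\phi(y_2)$ over the three pieces $y_2\le 0$, $0\le y_2\le 1$ and $y_2\ge 1$, comparing the resulting candidate values. A routine analysis of the downward parabolas $ty_2-y_2^2$ (vertex at $y_2=t/2$) and of the linear piece $(t-1)y_2$ yields $f^*(0,t)=0$ for $0\le t\le 1$, $f^*(0,t)=t-1$ for $1\le t\le 2$, and $f^*(0,t)=t^2/4$ for $t\ge 2$; the comparison on $t\ge 2$ uses $t^2/4-(t-1)=(t/2-1)^2\ge 0$. Substituting $t=-x$ into $h(x)=-f^*(0,-x)-2x$ and checking that the three pieces match at $x=-1$ and $x=-2$ then gives exactly the asserted formula. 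I expect the only delicate point to be the inner minimization defining $\phi$—in particular, being careful that the infimum over $y_1$ is the correct quantity and that its non-attainment does not affect $f^*$—while the concluding univariate optimization is entirely elementary.
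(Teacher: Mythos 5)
Your proposal is correct and is essentially the paper's argument in conjugate-function clothing: the paper computes $h(x)=\inf_{y_1,y_2,z}\cL(y_1,y_2,z;x)$ directly, which decomposes into exactly your $-g^*(x)$ term (the $\inf_{z\ge0}\{-zx\}$ piece), your inner minimization over $y_1$ yielding $\max(y_2,y_2^2)$, and the same elementary piecewise univariate optimization in $y_2$. The values of $f^*(0,t)$ and the resulting branches all check out, so only the packaging differs.
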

\begin{proof}
By the definition of the dual objective function,  we have
$$
\begin{array}{l}
h(x)\ds
=\inf_{y_1,y_2,z} \cL(y_1,y_2,z;x)
\\[3mm]
\ds
=\inf_{z\ge0,y_2}\big\{\inf_{y_1}(\max(e^{-y_1}+y_2,y_2^2)+(y_2-z-2)x)\big\}
\\[3mm]

\ds
=\inf_{z\ge 0,y_2}\{\max(y_2,y_2^2)+y_2x-zx-2x\}
\\[3mm]
\ds
=\min_{y_2}
\Big(
\inf_{y_2\in[0,1],z\ge 0}\big\{y_2+y_2x-zx-2x\big\},
\inf_{y_2\not\in[0,1],z\ge 0}\big\{y_2^2+y_2x-zx-2x\big\}\Big).
\end{array}
$$
For any given $x\in\Re$, we have
$$
\begin{array}{l}
\ds
\inf_{y_2\in[0,1],z\ge 0}\big\{y_2+y_2x-zx-2x\big\}
\\[2mm]
\ds
=\inf_{y_2\in[0,1]}\big\{y_2(1+x)\big\}
+\inf_{z\ge 0}\big\{-zx\big\}
-2x
=
\left\{
\begin{array}{ll}
1-x,\quad &\mbox{if}\quad x< -1,
\\[1mm]
-2x, &\mbox{if}\quad x\in[-1,0],
\\[1mm]
-\infty, &\mbox{if}\quad x> 0.
\end{array}
\right.
\end{array}
$$
Moreover, for any $x\in\Re$, it holds that
$$
\begin{array}{l}
\ds
\inf_{y_2\not\in[0,1],z\ge 0}\big\{y_2^2+y_2x-zx-2x\big\}
\\[3mm]
\quad\ds
=\inf_{y_2\not\in[0,1]}\big\{y_2^2+y_2x+x^2/4-x^2/4-2x\big\}+\inf_{z\ge0}\big\{-zx\big\}
\\[3mm]
\quad
\ds
=\inf_{y_2\not\in[0,1]}\big\{(y_2+x/2)^2\big\}+\inf_{z\ge 0}\big\{-zx\big\}-x^2/4-2x
\\[4mm]
\quad
=
\left\{
\begin{array}{ll}
-x^2/4-2x, \quad& \mbox{if}\quad x< -2,
\\[1mm]
1-x, &\mbox{if}\quad x\in[-2,-1],
\\[1mm]
-2x, &\mbox{if}\quad x\in[-1,0],
\\[1mm]
-\infty, &\mbox{if}\quad x> 0.
\end{array}
\right.
\end{array}
$$
Then by combining the above discussions on the two cases we obtain the conclusion of this lemma.
\qed
\end{proof}
\begin{figure}
  \includegraphics[width=0.495\textwidth]{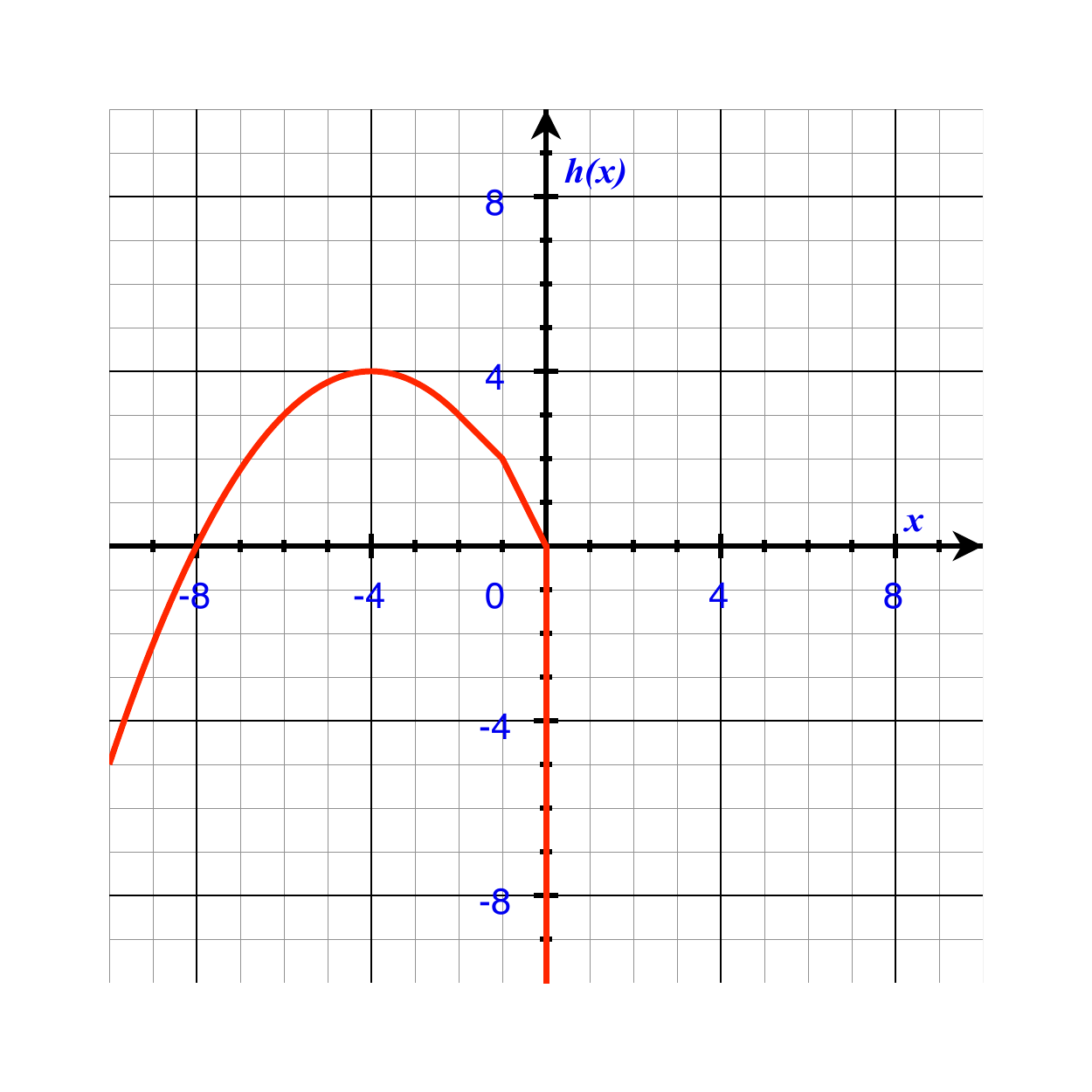}
  \includegraphics[width=0.495\textwidth]{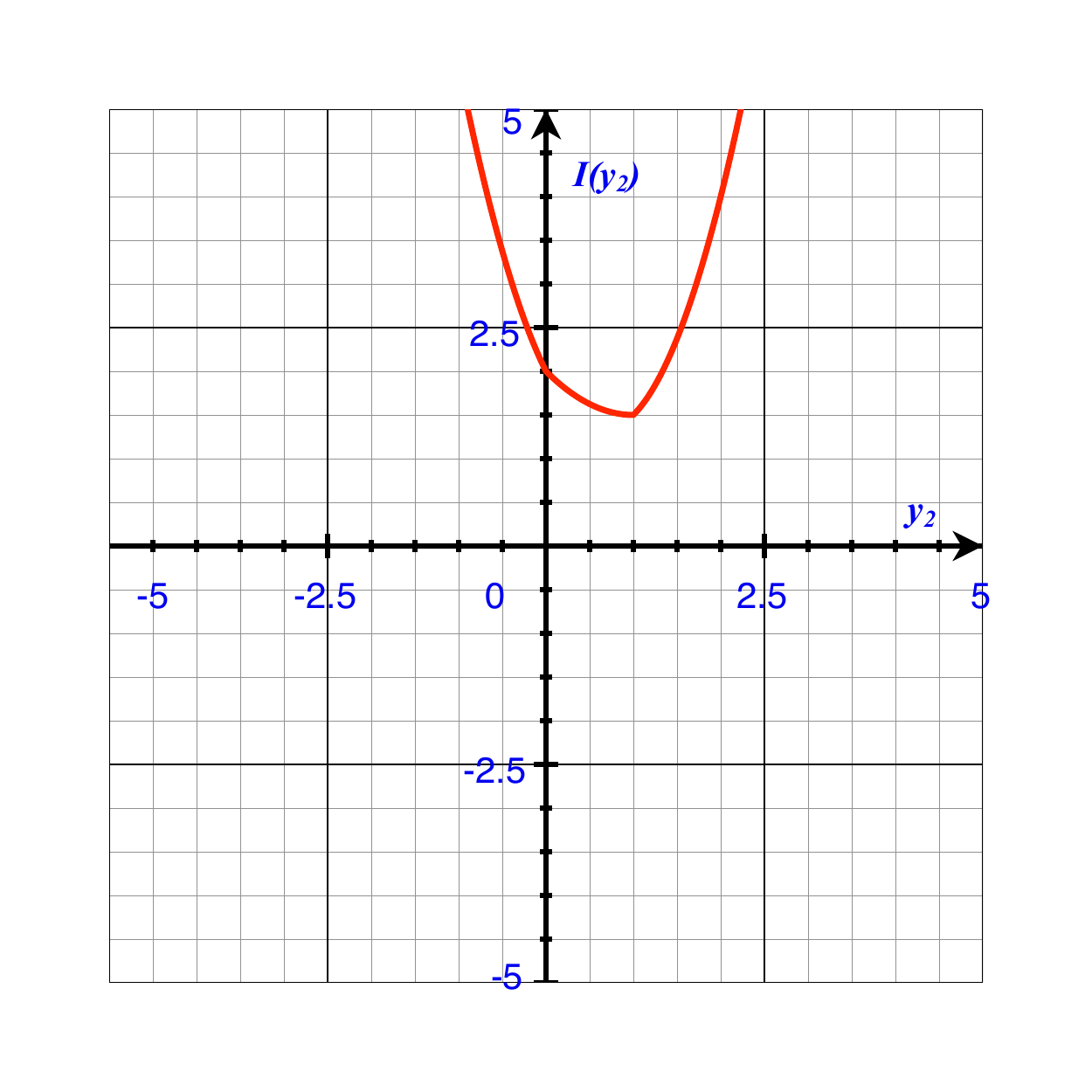}
\caption{Graphs of the dual objective function $h(x)$ (left) and the function $I(y_{2})$ (right).}
\label{fig:1}
\end{figure}

By Lemma \ref{dualobj}, one can  see that the optimal solution to the dual  of problem \eqref{problem} is $\bar x=-4$ and the optimal value of the dual of problem \eqref{problem} is $h(-4)=4$ (see  Fig.  \ref{fig:1}).
Moreover, the set of  solutions to the KKT system \eqref{kkt} for problem \eqref{problem} is given by
$$\big\{(y_1,y_2,z, x)\in \Re^4\ |\  y_1\ge-\log_e 2,\ y_2=2,\ z=0, \ x=-4 \big\}.$$
Next, we consider solving problem \eqref{problem} by using the ADMM scheme \eqref{admm}. For convenience, let $\sigma=1$ and set the initial point $(x^0, y^0_1, y_2^0, z^0)=(0,0,0,0)$.
Now, one should compute $(y_{1}^1,y_{2}^1)$ by solving
$$
\min_{y_1,y_2}
\cL_\sigma (y_1,y_2,z^0;x^0).
$$
Define the function $I(\cdot):\Re\to[-\infty, +\infty]$ by
$$
\begin{array}{ll}
I(y_2):&=\ds
\inf_{y_1}
\cL_\sigma (y_1,y_2,z^0;x^0)
\\[1mm]
&=
\ds
\inf_{y_1}\Big\{
\max\big(e^{-y_1}+y_2,y_2^2\big)+(y_2-2)^2/2
\Big\}
\\[2mm]
&
=\left\{
\begin{array}{ll}
\frac{3}{2} y_2^2-2y_2+2\quad  &\mbox{if}\quad y_{2}\not\in[0,1],
\\[1mm]
\frac{1}{2} y^2_2-y_2+2   &\mbox{if}\quad y_{2}\in[0,1].
\end{array}
\right.
\end{array}
$$
By direct calculations we can see that the above  infimum is attained at $\bar y_2=1$ with $I(\bar y_2)=1.5$ (see Fig. \ref{fig:1}).
However, we have for any $y_1\in\Re$,
$$
\cL_\sigma (y_1,1,0 ;0)
=
\max(e^{-y_1}+1,1)+0.5
=
e^{-y_{1}}+1.5
>\inf_{y_1,y_2}
\cL_\sigma (y_1,y_2,z^0;x^0).
$$
This means that although $\inf_{y_1,y_2}\cL_\sigma(y_1,y_2,z^0; x^0)=1.5$ is finite, it
cannot be attained at any $(y_1,y_2)\in\Re^2$.
Then the subproblem for computing $(y_{1}^{1},y_{2}^{1})$ is not solvable and  hence the   ADMM scheme (\ref{admm})  is not well-defined.
Note that for problem \eqref{problem}, Assumption \ref{as-1} fails to hold  since the direction $y=(1,0)$ satisfies $\A^{*}y=0$ and
$f0^{+}(y)=0$ but $f0^{+}(-y)=+\infty$.

 \begin{remark} The counterexample constructed here is very simple. Yet, one may still ask if the objective function $f$ about $(y_1, y_2)$ in
problem (\ref{problem})  can be replaced by an even simpler quadratic function. Actually, this is not possible  as Assumption \ref{as-1} holds if $f$ is a quadratic function and the original problem has a solution.
Specifically, suppose that $\alpha\in\Re$ is a given number, $\Q:\Y\to\Y$ is a self-adjoint positive semidefinite linear operator and $a\in\Y$ is a given vector while $f$ takes the following form
$$
\begin{array}{l}
f(y)=\frac{1}{2}\langle y,\Q y\rangle+\langle a, y\rangle+\alpha,\quad\forall\, y\in\Y.
\end{array}
$$
From \cite[Pages 67-68]{rocbook} we know that
\begin{equation}
\label{quadratic-recession}
f0^+(y)=\left\{
\begin{array}{ll}
\langle a,y\rangle, &\quad \mbox{if}\quad\Q y=0,\\[1mm]
+\infty,&\quad \mbox{if}\quad\Q y\neq 0.
\end{array}
\right.
\end{equation}
If problem \eqref{primal} has a  solution, one must have $f0^+(y)\ge 0$ whenever $\A^*y=0$.
This, together with \eqref{quadratic-recession}, clearly implies that Assumption \ref{as-1} holds.
\end{remark}

\section{Convergence Properties of sPADMM}
\label{sec:converge}
The example presented in the previous section motivates  us to  consider the convergence of the sPADMM scheme \eqref{spadmm}  with a computationally more attractive large step-length.
We re-emphasize that the sPADMM scheme \eqref{spadmm} is a natural yet more useful extension of the ADMM scheme \eqref{admm} and all the results presented in this section are applicable for the AMMM scheme \eqref{admm}.

For convenience, we introduce some notations, which will be used throughout this section.
We use $\Sigma_{f}$ and $\Sigma_{g}$  to denote the two self-adjoint positive semidefinite linear operators whose definitions, corresponding to the two functions $f$ and $g$ in problem \eqref{primal}, can be drawn from  \eqref{monotone}.
Let $(\bar x,\bar y,\bar z)\in\X\times\Y\times\Z$ be a given vector, whose definition will be specified latter. We denote $x_e:=x-\bar x$, $y_e:=y-\bar y$ and $z_e:=z-\bar z$ for any $(x,y,z)\in\X\times\Y\times\Z$.
If additionally the sPADMM scheme $\eqref{spadmm}$ generates an infinite sequence $\{(x^{k},y^{k},z^{k})\}$, for $k\ge 0$ we denote $x^k_e:=x^k-\bar x$, $y^k_e:=y^k-\bar y$ and $z^k_e:=z^k-\bar z$, and define the following auxiliary notations
\begin{equation}
\label{uv}
\left\{
\begin{array}{ll}
\displaystyle u^{k}:=-\A[x^{k}+(1-\tau)\sigma(\A^*y_e^{k}+\B^*z_e^{k})+\sigma\B^*(z^{k-1}-z^{k})]-\cS(y^{k}-y^{k-1}),
\\[2mm]
\displaystyle v^{k}:=-\B[x^{k}+(1-\tau)\sigma(\A^*y_e^{k}+\B^*z_e^{k})]-\T(z^{k}-z^{k-1}),
\\[2mm]
\Psi_k:=
\frac{1}{\tau\sigma}\|x^k_e\|^2
+\|y_e^{k}\|_\cS^2
+\|z_e^k\|^2_{\T+\sigma\B\B^*},
\color{black}
\\[2mm]
\Phi_k:
=\Psi_{k}+\|z^k-z^{k-1}\|_\T^2
+\max(1-\tau, 1-\tau^{-1})\sigma
\|\A^*y_e^{k}+\B^*z_e^{k}\|^2
\end{array}
\right.
\end{equation}
with the convention $y^{-1}\equiv y^0$ and $z^{-1}\equiv z^0$.
Based on these notations, we have the following result.

\begin{proposition}
\label{prop:2}
Suppose that $(\bar x,\bar y,\bar z)\in\X\times\Y\times\Z$ is a solution to the KKT system $\eqref{kkt}$, and that the sPADMM scheme $\eqref{spadmm}$ generates an infinite sequence $\{(x^{k},y^{k},z^{k})\}$ $($which is guaranteed to be true if  Assumptions $\ref{as-1}$ and $\ref{as-11}$ hold, cf. Proposition $\ref{theorem:well-defined}$$)$.
Then, for any $k\ge 1$,
\begin{equation}
\label{opt_cond}
u^{k}\in\partial f(y^{k}),
\quad
v^{k}\in\partial g(z^{k}),
\end{equation}
\begin{equation}
\label{ineq-last}
\begin{array}{ll}
\Phi_k-\Phi_{k+1}
\ge&
2\|y_e^{k+1}\|_{\Sigma_f}^2
+2\|z_e^{k+1}\|_{\Sigma_g}^2
+\|y^{k+1}-y^k\|_\cS^2
+\|z^{k+1}-z^k\|_\T^2
\\[2mm]
&+\min(1,1-\tau+\tau^{-1})\sigma\|\A^*y_e^{k+1}+\B^*z_e^{k+1}\|^2
\\[2mm]
&+\min(\tau,1+\tau-\tau^2)\sigma\|\B^*(z^{k+1}-z^k)\|^2
\end{array}
\end{equation}
and
\begin{equation}
\label{ineq-large}
\begin{array}{ll}
\Psi_{k}-\Psi_{k+1}
&\ge
2\|y_e^{k+1}\|_{\Sigma_f}^2
+2\|z_e^{k+1}\|_{\Sigma_g}^2
+\|y^{k+1}-y^k\|_\cS^2
+\|z^{k+1}-z^k\|_\T^2
\\[2mm]
&\quad
+(1-\tau)\sigma\|\A^*y^{k+1}_e+\B^*z^{k+1}_e\|^2
+\sigma\|\A^*y^{k+1}_e+\B^*z^{k}_e\|^{2}.
\end{array}
\end{equation}
\end{proposition}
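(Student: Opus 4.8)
My plan is to establish the three assertions in sequence: first the subgradient inclusions \eqref{opt_cond}, which then serve as the engine for a monotonicity-plus-polarization argument that yields the two descent estimates \eqref{ineq-last} and \eqref{ineq-large}.

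To obtain \eqref{opt_cond} I would write the first-order optimality conditions of the two minimization subproblems in \eqref{spadmm} that produce $y^k$ and $z^k$ (i.e.\ with data $z^{k-1}$ and $x^{k-1}$). The $y$-subproblem gives $0\in\partial f(y^k)+\A x^{k-1}+\sigma\A(\A^*y^k+\B^*z^{k-1}-c)+\cS(y^k-y^{k-1})$, and analogously for $z$. Three substitutions then convert these into the stated forms: (i) eliminate $x^{k-1}$ via the multiplier update $x^{k-1}=x^k-\tau\sigma(\A^*y^k+\B^*z^k-c)$; (ii) use the KKT feasibility $\A^*\bar y+\B^*\bar z=c$ to write $\A^*y^k+\B^*z^k-c=\A^*y_e^k+\B^*z_e^k$; and (iii) split $z^{k-1}-\bar z=z_e^k+(z^{k-1}-z^k)$. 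Collecting the $\A$- and $\B$-terms reproduces exactly the definitions of $u^k$ and $v^k$ in \eqref{uv}. This step is pure bookkeeping.

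With \eqref{opt_cond} available, I would combine it with the KKT inclusions $-\A\bar x\in\partial f(\bar y)$ and $-\B\bar x\in\partial g(\bar z)$ through the strong monotonicity \eqref{monotone}. At step $k+1$ this produces $\langle u^{k+1}+\A\bar x,\,y_e^{k+1}\rangle\ge\|y_e^{k+1}\|_{\Sigma_f}^2$ and $\langle v^{k+1}+\B\bar x,\,z_e^{k+1}\rangle\ge\|z_e^{k+1}\|_{\Sigma_g}^2$, each of which I scale by $2$ (this is the origin of the coefficient $2$ in front of the $\Sigma_f,\Sigma_g$ terms). To expose the $\|\B^*(z^{k+1}-z^k)\|^2$ contribution in \eqref{ineq-last} I would, in addition, apply monotonicity of $\partial g$ between the two consecutive $z$-iterates, $\langle v^{k+1}-v^k,\,z^{k+1}-z^k\rangle\ge\|z^{k+1}-z^k\|_{\Sigma_g}^2$. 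I then substitute the explicit forms of $u^{k+1},v^{k+1},v^k$ from \eqref{uv} and replace every multiplier error using $x_e^{k+1}=x_e^k+\tau\sigma(\A^*y_e^{k+1}+\B^*z_e^{k+1})$, which converts the telescoping part $\frac{1}{\tau\sigma}(\|x_e^k\|^2-\|x_e^{k+1}\|^2)$ of $\Psi$ into inner products with the residual $\A^*y_e^{k+1}+\B^*z_e^{k+1}$ together with a $\tau\sigma$-weighted residual square.

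The core of the argument is then to turn all resulting inner products — the $\cS$-, $\T$-, and $\sigma\B\B^*$-weighted couplings such as $\langle y^{k+1}-y^k,y_e^{k+1}\rangle_\cS$ — into differences of norms by means of the polarization identity \eqref{eq:triangle}, so that the inequality telescopes and $\Psi_k-\Psi_{k+1}$ (resp.\ $\Phi_k-\Phi_{k+1}$) can be read off. I expect this accounting to be the main obstacle: the genuinely coupled terms, namely the $\langle\A^*y_e^{k+1},\B^*z_e^{k+1}\rangle$-type products and the interaction between $z^{k+1}-z^k$ and the feasibility residual, must be regrouped and bounded by Cauchy--Schwarz/Young inequalities with carefully chosen $\tau$-dependent weights in order to land on the precise nonnegative coefficients $\min(1,1-\tau+\tau^{-1})$, $\min(\tau,1+\tau-\tau^2)$ and $(1-\tau)$. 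In this scheme \eqref{ineq-large} emerges as the cleaner ``raw'' estimate that retains the residual $\A^*y_e^{k+1}+\B^*z_e^k$ intact, whereas \eqref{ineq-last} follows by absorbing the extra $\|z^{k+1}-z^k\|_\T^2$ term and the feasibility-residual difference into the augmented Lyapunov function $\Phi$ — the grouping that surfaces the $\|\B^*(z^{k+1}-z^k)\|^2$ term. Nonnegative leftovers, such as the $\|z^{k+1}-z^k\|_{\Sigma_g}^2$ coming from the consecutive-iterate monotonicity, are simply discarded.
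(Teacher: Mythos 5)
Your derivation of \eqref{opt_cond} is correct and is exactly what the paper intends (the paper simply asserts it follows from the first-order optimality conditions): eliminating $x^{k-1}$ via $x^{k-1}=x^{k}-\tau\sigma(\A^*y^{k}+\B^*z^{k}-c)$ and using $\A^*\bar y+\B^*\bar z=c$ does reproduce the expressions for $u^{k}$ and $v^{k}$ in \eqref{uv}, so that part of the proposal checks out.

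For \eqref{ineq-last} and \eqref{ineq-large}, however, there is a genuine gap: the proposal stops exactly where the content of the proposition begins. You have correctly identified the ingredients --- monotonicity \eqref{monotone} applied between $(y^{k+1},z^{k+1})$ and the KKT point, monotonicity of $\partial g$ between consecutive $z$-iterates, the substitution $x_e^{k+1}=x_e^{k}+\tau\sigma(\A^*y_e^{k+1}+\B^*z_e^{k+1})$, and the polarization identity \eqref{eq:triangle} --- and these are indeed the ingredients of the argument the paper relies on; but the paper does not rederive the estimates, it cites \cite[Theorem~B.1, parts (a), (b) and (B.12)]{fazel13} and only performs the short final manipulation turning (B.12) into \eqref{ineq-large}. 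Your plan defers the entire coefficient-producing computation (``I expect this accounting to be the main obstacle''), so nothing in the proposal actually establishes the constants $\min(1,1-\tau+\tau^{-1})$, $\min(\tau,1+\tau-\tau^2)$, $(1-\tau)$, or the extra term $\sigma\|\A^*y_e^{k+1}+\B^*z_e^{k}\|^{2}$. Two further points of caution: (i) the decisive cross term $\sigma\langle\B^*(z^{k+1}-z^k),\A^*y_e^{k+1}+\B^*z_e^{k+1}\rangle$ is handled by the exact identity \eqref{eq:triangle} (completing the square so that $\|\A^*y_e^{k+1}+\B^*z_e^{k}\|^{2}$ appears), not by a Cauchy--Schwarz/Young bound, which would discard information and cannot reproduce the stated equalities of coefficients; (ii) the two $\min$-coefficients are \emph{not} nonnegative for all $\tau$ --- they become nonpositive once $\tau\ge(1+\sqrt5)/2$, which is precisely why the separate estimate \eqref{ineq-large} and the extra summability hypothesis in Theorem~\ref{theorem:c1} are needed. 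To close the proof you must either carry out this algebra in full or, as the paper does, invoke Theorem~B.1 of Fazel et al.\ \cite{fazel13}.
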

\begin{proof}
For any $k\ge 1$, the inclusions in \eqref{opt_cond} directly follow  from the first-order optimality condition of the subproblems in the sPADMM scheme (\ref{spadmm}).
The inequality \eqref{ineq-last} has been proved in Fazel et al.
\cite[parts (a) and (b) in Theorem B.1]{fazel13}.
Meanwhile, by using (B.12) in \cite[Theorem B.1]{fazel13} and \eqref{eq:triangle} we can get
$$
\begin{array}{l}
\frac{1}{2\tau\sigma}(
\|x^k_e\|^2-\|x^{k+1}_e\|^2)
-\frac{\sigma}{2}\|\B^*(z^{k+1}-z^k)\|^2
-\frac{\sigma}{2}\|\B^*z_e^{k+1}\|^2
+\frac{\sigma}{2}\|\B^*z_e^k\|^2
\\[2mm]
-\frac{2-\tau}{2}\sigma\|\A^*y^{k+1}_e+\B^*z^{k+1}_e\|^2
+\sigma\langle \B^*(z^{k+1}-z^k), \A^*y_e^{k+1}+\B^*z_e^{k+1}\rangle
\\[2mm]
-\frac{1}{2}\|y_e^{k+1}\|_\cS^2
+\frac{1}{2}\|y_e^{k}\|_\cS^2
-\frac{1}{2}\|z_e^{k+1}\|_\T^2
+\frac{1}{2}\|z_e^{k}\|_\T^2
\\[2mm]
\ge \|y_e^{k+1}\|^2_{\Sigma_f}
+\|z_e^{k+1}\|^2_{\Sigma_g}
+\frac{1}{2}\|y^{k+1}-y^k\|_\cS^2
+\frac{1}{2}\|z^{k+1}-z^k\|_\T^2,
\end{array}
$$
which, together with the definition of $\Psi_{k}$ in \eqref{uv}, implies \eqref{ineq-large}. This completes the proof.
\qed
\end{proof}

Now,  we are ready to present several convergence properties of the sPADMM  scheme \eqref{spadmm}.

\begin{theorem}
\label{theorem:c1}
Assume that  the solution set to the KKT system $\eqref{kkt}$ for problem $\eqref{primal}$ is nonempty.
Suppose that  the sPADMM scheme $\eqref{spadmm}$ generates an infinite sequence $\{(x^{k},y^{k},z^{k})\}$, which is guaranteed to be true if  Assumptions $\ref{as-1}$ and $\ref{as-11}$ hold.
Then, if
\begin{equation}
\label{cond:tau}
\tau\in\big(\, 0,(1+\sqrt{5}\,)/2\, \big) \quad\mbox{or}\quad \tau\ge (1+\sqrt{5}\,)/2\ \mbox{but}\  \sum_{k=0}^\infty \|x^{k+1}-x^{k}\|^2<+\infty,
\end{equation}
one has the following results:
\begin{description}
\item [{\rm(a)}]
the sequence $\{x^k\}$ converges to an optimal  solution to the dual problem $\eqref{dual}$, and
the primal objective function value sequence $\{f(y^k)+g(z^k)\}$ converges to the optimal value;

\medskip
\item [{\rm(b)}]
the sequences $\{f(y^{k})\}$ and $\{g(z^{k})\}$ are bounded, and
if  Assumptions $\ref{as-2}$ and $\ref{as-22}$ hold, the sequence $\{y^{k}\}$ and $\{z^{k}\}$ are also bounded;

\medskip
\item[{\rm(c)}]
any accumulation point of the sequence $\{(x^{k},y^{k},z^{k})\}$ is a solution to the KKT system $\eqref{kkt}$, and if $(x^\infty, y^\infty,z^\infty)$ is one of its  accumulation point,
$ \A^{*} y^{k} \to \A^{*} y^{\infty}$,
$ (\Sigma_f +\cS) y^{k} \to (\Sigma_f +\cS) y^{\infty}$,
$ \B^{*} z^{k} \to \B^{*} z^{\infty}$
and
$ (\Sigma_g+\T) z^{k} \to (\Sigma_g +\T) z^{\infty}$ as  $k\to\infty$;

\medskip
\item [{\rm(d)}]
if $\Sigma_f+\A\A^*+\cS\succ 0$ and $\Sigma_g+\B\B^*+\T\succ 0$, then each of the subproblems in the sPADMM scheme $\eqref{spadmm}$ has a unique optimal solution and the whole  sequence $\{(x^{k},y^{k},z^{k})\}$ converges to a solution to the KKT system $\eqref{kkt}$.
\end{description}
\end{theorem}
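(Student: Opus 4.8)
The plan is to run a quasi-Fej\'er analysis built entirely on Proposition \ref{prop:2}, disposing of the two regimes in \eqref{cond:tau} at once. When $\tau\in(0,(1+\sqrt5)/2)$ the scalars $\min(1,1-\tau+\tau^{-1})$ and $\min(\tau,1+\tau-\tau^2)$ in \eqref{ineq-last} are strictly positive (this is exactly where the golden-ratio threshold enters, as it is equivalent to $\tau^2-\tau-1<0$), so \eqref{ineq-last} shows $\{\Phi_k\}$ is nonincreasing and bounded below, hence convergent, and subtracting the two vanishing tail terms recovers convergence of $\{\Psi_k\}$. When $\tau\ge(1+\sqrt5)/2$ and $\sum_k\|x^{k+1}-x^k\|^2<\infty$, I would instead apply Lemma \ref{lemma:sq-sum} to \eqref{ineq-large} with $a_k=\Psi_k$ and $\varepsilon_k=(\tau-1)\sigma\|\A^*y^{k+1}_e+\B^*z^{k+1}_e\|^2$, which is summable because $x^{k+1}-x^k=\tau\sigma(\A^*y^{k+1}_e+\B^*z^{k+1}_e)$; this again gives convergence of $\{\Psi_k\}$. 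In either case telescoping the surviving nonnegative terms yields $\|y^{k+1}-y^k\|_\cS\to0$, $\|z^{k+1}-z^k\|_\T\to0$, $\|y^k_e\|_{\Sigma_f}\to0$, $\|z^k_e\|_{\Sigma_g}\to0$, the primal residual $\A^*y^k_e+\B^*z^k_e\to0$, and $\B^*(z^k-z^{k-1})\to0$. From the convergence of $\{\Psi_k\}$ I read off that $\{x^k\}$, $\{\B^*z^k\}$ and $\{\cS^{1/2}y^k\}$ are bounded, whence $\{\A^*y^k\}$ is bounded too via the vanishing residual, and from \eqref{uv} that $u^k+\A x^k\to0$ and $v^k+\B x^k\to0$.

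For part (a) I would first prove the objective-value claim by a two-sided estimate that does not presuppose convergence of $\{x^k\}$: the KKT subgradient inclusions at a fixed primal solution $(\bar y,\bar z)$ give $\liminf\,(f(y^k)+g(z^k))\ge p^\star$ (only the vanishing sum $\A^*y^k_e+\B^*z^k_e$ appears), while the inclusions \eqref{opt_cond} together with $u^k+\A x^k\to0$, $v^k+\B x^k\to0$ and the vanishing residual give $\limsup\,(f(y^k)+g(z^k))\le p^\star$, where $p^\star$ is the common optimal value. Next, for any subsequential limit $x^\infty$ of the bounded sequence $\{x^k\}$, I would combine the Fenchel--Young equalities $f^*(u^k)=\langle u^k,y^k\rangle-f(y^k)$ and $g^*(v^k)=\langle v^k,z^k\rangle-g(z^k)$ (whose cross terms collapse to $-\langle c,x^k\rangle+o(1)$ precisely because every $o(1)$ factor is paired with a bounded image such as $\A^*y^k$ or $\cS^{1/2}y^k$) with lower semicontinuity of $f^*,g^*$ at $-\A x^\infty,-\B x^\infty$ to get $h(x^\infty)\ge p^\star$; weak duality then forces $h(x^\infty)=p^\star$, so every accumulation point is dual-optimal. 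Finally I would upgrade to convergence of the whole sequence: given two accumulation points $x^{\infty,1},x^{\infty,2}$, each paired with $(\bar y,\bar z)$ is a KKT triple, so the two associated versions of $\{\Psi_k\}$ both converge; their difference is the pure $x$-term $\frac{1}{\tau\sigma}(\|x^k-x^{\infty,1}\|^2-\|x^k-x^{\infty,2}\|^2)$, and evaluating its (existing) limit along the two defining subsequences forces $x^{\infty,1}=x^{\infty,2}$.

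Part (b) is then short: the subgradient inequality at $(\bar y,\bar z)$ bounds $f(y^k)$ and $g(z^k)$ below (using boundedness of $\A^*y^k_e,\B^*z^k_e$), and since their sum converges each is bounded. For boundedness of $\{y^k\}$ under Assumption \ref{as-2} I would argue by contradiction: a subsequence with $\|y^{k_j}\|\to\infty$ and $y^{k_j}/\|y^{k_j}\|\to d$ satisfies $\A^*d=0$ and $\cS d=0$ (because $\A^*y^k$ and $\cS^{1/2}y^k$ are bounded), while $\{y^{k_j}\}$ lying in a fixed sublevel set of $f$ makes $d$ a recession direction, so $f0^+(d)\le0$, contradicting Assumption \ref{as-2}; the $z$-statement is identical. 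For part (c), closedness of the graphs of $\partial f,\partial g$ turns $u^{k_j}\to-\A x^\infty$, $v^{k_j}\to-\B x^\infty$ into $-\A x^\infty\in\partial f(y^\infty)$, $-\B x^\infty\in\partial g(z^\infty)$, which together with $\A^*y^\infty+\B^*z^\infty=c$ is exactly \eqref{kkt}; then, taking this KKT triple as reference, $\{\Psi_k\}$ converges and a subsequence tends to $0$, so $\Psi_k\to0$, and reading off its terms (with $\|y^k_e\|_{\Sigma_f}\to0$, $\|z^k_e\|_{\Sigma_g}\to0$ from the telescoping) delivers the four stated limits.

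For part (d), $\Sigma_f+\A\A^*+\cS\succ0$ makes $\widehat F$ in \eqref{unconstrained} strongly convex, hence uniquely minimized and, a fortiori, forces Assumption \ref{as-2} (so Proposition \ref{theorem:well-defined} applies); this guarantees boundedness and thus an accumulation point, and applying $(\Sigma_f+\cS+\A\A^*)^{-1}$ to the already-convergent quantities $(\Sigma_f+\cS)y^k$ and $\A(\A^*y^k)$ from part (c) promotes them to convergence of $y^k$ itself, and symmetrically for $z^k$. The main obstacle is that $\{y^k\}$ and $\{z^k\}$ need not be bounded in parts (a) and (c), so the usual ``pass to the limit in the iterates'' shortcut is unavailable; every limiting argument must be routed through the bounded images $\A^*y^k$, $\B^*z^k$, $\cS^{1/2}y^k$, $\T^{1/2}z^k$, and one must verify that each error term genuinely pairs an $o(1)$ factor with such a bounded image. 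The second delicate point is obtaining convergence of the \emph{entire} dual sequence $\{x^k\}$ rather than a subsequence, for which the comparison of two Lyapunov functions $\{\Psi_k\}$ at two accumulation points, combined with the identification of every accumulation point as a dual optimum via lower semicontinuity of the conjugates and weak duality, is the crucial device.
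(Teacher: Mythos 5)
Your proposal is correct and is built on the same Lyapunov framework as the paper (the estimates \eqref{ineq-last} and \eqref{ineq-large} from Proposition \ref{prop:2}, the summability consequences, and the principle of routing every limit through the bounded images $\A^*y^k$, $\B^*z^k$, $\cS^{1/2}y^k$, $\T^{1/2}z^k$), but three of your sub-arguments genuinely depart from the paper's. First, to show that accumulation points of $\{x^k\}$ are dual optimal, the paper passes to the limit in the inclusion $\A^*y^{k_j}+\B^*z^{k_j}\in \A^*\partial f^*(u^{k_j})+\B^*\partial g^*(v^{k_j})$ and invokes outer semicontinuity of the subdifferential to land on the dual optimality condition $c\in \A^*\partial f^*(-\A x^\infty)+\B^*\partial g^*(-\B x^\infty)$; you instead combine the Fenchel--Young equalities with lower semicontinuity of $f^*,g^*$ and weak duality to get $h(x^\infty)=p^\star$ directly. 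Your route requires establishing the objective-value convergence \emph{before} the dual analysis (the paper does it after), which you correctly arrange, and it buys a value-based certificate rather than an inclusion-based one. Second, for convergence of the whole sequence $\{x^k\}$, the paper uses a $\liminf$/$\limsup$ argument: it extracts a subsequence attaining $\liminf_k\phi_k$ (resp.\ $\psi_k$), resets $\bar x$ to a subsequential limit $\tilde x$ of $\{x^k\}$ along it, and deduces $\limsup_k\|x^k_e\|^2\le \limsup_k\Phi_k-\liminf_k\phi_k=0$; you use the Opial-type device of comparing the two Lyapunov sequences $\Psi_k$ associated with two accumulation points (both paired with the same $(\bar y,\bar z)$, hence both KKT multipliers), whose difference reduces to the pure $x$-term and forces the two limits to coincide. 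Both are valid; yours is the more standard Fej\'er argument, while the paper's avoids having to observe that every accumulation point of $\{x^k\}$ yields a KKT triple with $(\bar y,\bar z)$. Third, in part (b) the paper bounds $f(y^k)$ from above directly by controlling $\langle u^k,y^k\rangle$ through \eqref{uv}, whereas you bound each of $f(y^k)$, $g(z^k)$ from below via the KKT subgradients and then use the already-proved convergence of the sum; this is shorter but makes (b) depend on (a). The remaining parts (the recession-direction contradiction for boundedness of $\{y^k\}$, the closed-graph argument and the $\Psi_k\to0$ reduction in (c), and the strong-convexity plus inversion of $\Sigma_f+\cS+\A\A^*$ in (d)) match the paper's proof in substance; your explicit remark in (d) that positive definiteness forces Assumption \ref{as-2} and hence boundedness is a point the paper leaves implicit.
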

\begin{proof}
Let $(\bar x,\bar y,\bar z)\in\X\times\Y\times\Z$ be an arbitrary solution to the KKT system $\eqref{kkt}$ of problem $\eqref{primal}$.
We first establish some basic results and then prove (a) to (d) one by one.
In the following, the notations provided at the beginning of this section are used.

Note that $\|\A^{*}y_{e}^{k}\|\le\|\A^{*}y_{e}^{k}+\B^{*}z_{e}^{k}\|+\|\B^{*}z_{e}^{k}\|$ for any $k\ge 0$. Then, if $\tau\in(0,(1+\sqrt{5})/2)$, by using \eqref{uv} and \eqref{ineq-last} we obtain that the sequences
\begin{equation}
\label{sq1}
\{\|x^k\|\},\ \{\|y^{k}\|_{\cS+\sigma\A\A^{*}}\}\  \mbox{and} \  \{\|z^k\|_{\T+\sigma\B\B^*}\}
\end{equation}
are all bounded,
\begin{equation}
\label{sq2}
\sum_{k=0}^{\infty}\|y_e^{k}\|_{\Sigma_f}^2,\,
\sum_{k=0}^{\infty}\|z_e^{k}\|_{\Sigma_g}^2,\,
\sum_{k=0}^{\infty}\|\A^*y_e^{k}+\B^*z_e^{k}\|^2,\,
\sum_{k=0}^{\infty}\|\B^*(z^{k+1}-z^k)\|^2\,
< +\infty
\end{equation}
and
\begin{equation}
\label{sq3}
\sum_{k=0}^{\infty}\|y^{k+1}-y^k\|_\cS^2,\
\sum_{k=0}^{\infty}\|z^{k+1}-z^k\|_\T^2< +\infty.
\end{equation}
If $\tau\ge(1+\sqrt{5})/2$ but $\sum_{k=0}^\infty \|x^{k+1}-x^{k}\|^2<+\infty$, by using the equality that $x^{k+1}-x^{k}=\tau\sigma(\A^*y^{k+1}_e+\B^*z^{k+1}_e)$ we know $\sum_{k=0}^{\infty}\|\A^*y_e^{k}+\B^*z_e^{k}\|^2 < +\infty$.
Therefore, by using $\|\A^{*}y_{e}^{k}\|\le\|\A^{*}y_{e}^{k}+\B^{*}z_{e}^{k}\|+\|\B^{*}z_{e}^{k}\|$
and \eqref{ineq-large} we know that the sequences in \eqref{sq1} are all bounded.
Moreover, it holds that
$$
\|\B^*(z^{k+1}-z^k)\|^2\le 2\|\A^*y^{k+1}_e+\B^*z^{k+1}_e\|^2
+2\|\A^*y^{k+1}_e+\B^*z^{k}_e\|^{2},$$
which, together with \eqref{ineq-large}, implies that \eqref{sq2} and \eqref{sq3} hold.

To sum up, we have shown that when \eqref{cond:tau} holds, the sequences in \eqref{sq1} are bounded and \eqref{sq2} and \eqref{sq3} hold.
This, consequently, implies that $\{u^k\}$ and $\{v^k\}$ are bounded. In the following, we prove (a) to (d) separately.

\medskip
\noindent {\bf (a)}
Since $\{x^k\}$ is a bounded sequence, for any one of its accumulation points, e.g. $x^\infty\in\X$, it admits a subsequence, say, $\{x^{k_{j}}\}_{j\ge 0}$, such that $\lim\limits_{j\to\infty}x^{k_{j}}=x^{\infty}$.
By taking limits in the first two equalities of \eqref{uv} along with $k_j$ for $j\to\infty$ and using \eqref{sq2} and \eqref{sq3}, we obtain that
\begin{equation}
\label{uvlimit}
u^\infty:=\lim_{j\to\infty} u^{k_j}=-\A x^{\infty}
\quad
\mbox{and}
\quad
v^\infty:=\lim_{j\to\infty} v^{k_j} =-\B x^{\infty}
.
\end{equation}
From \eqref{opt_cond} and \eqref{subgradientchange} we know that for any $k\ge 1$,
$
y^{k}\in\partial f^*(u^{k})
$
and
$
z^{k}\in\partial g^*(v^{k})
$. Hence, we can get
$
\A^*y^{k}\in \A^*\partial f^*(u^{k})
$
and
$
\B^*z^{k}\in \B^*\partial g^*(v^{k})
$ so that
\begin{equation}
\label{in:aybz}
\A^*y^{k_j}+\B^*z^{k_j}\in \A^*\partial f^*(u^{k_j})+\B^*\partial g^*(v^{k_j}), \quad\forall\, j\ge 0.
\end{equation}
Then, by using \eqref{sq2}, \eqref{sq3}, \eqref{uvlimit}, \eqref{in:aybz} and the outer semi-continuity of subdifferential mappings of closed proper convex functions we know that
\begin{equation}
\label{opt_dual}
c\in \A^*\partial f^*(-\A x^\infty)+\B^*\partial g^*(-\B x^\infty).
\end{equation}
This implies that $x^\infty$ is a solution to the dual problem \eqref{dual}.
Therefore, we can conclude that any accumulation of $\{x^k\}$ is a solution to the dual problem \eqref{dual}.
To finish the proof of part (a), we need to show that $\{x^k\}$ is a convergent sequence.
This will be done in the following.

We first consider the case that $\tau\in(0,(1+\sqrt{5})/2)$.
Define the sequence $\{\phi_k\}_{k\ge 1}$ by
$$
\phi_k:=
\|y_e^{k}\|_\cS^2
+\|z_e^k\|^2_{\T+\sigma\B\B^*}
+\|z^k-z^{k-1}\|_\T^2
+\max(1-\tau, 1-\tau^{-1})\sigma
\|\A^*y_e^{k}+\B^*z_e^{k}\|^2.
$$
From \eqref{ineq-last} in Proposition \ref{prop:2} and the fact that $\Phi_k\ge\phi_k$, we know that $\{\phi_k\}$ is a nonnegative and bounded sequence. Thus, there exists a subsequence of $\{\phi_{k}\}$, say $\{\phi_{k_{l}}\}$, such that $
\lim\limits_{l\to\infty}\phi_{k_l}=\liminf\limits_{k\to\infty}\phi_k$.
Since $\{x^{k_l}\}$ is bounded, it must has a convergent subsequence, say, $\{x^{k_{l_i}}\}$, such that $\tilde x:=\lim\limits_{i\to\infty}x^{k_{l_i}}$ exists.
Note that $(\tilde x, \bar y,\bar z)$ is a solution to the KKT system \eqref{kkt}.
Therefore, without loss of generality, we can reset $\bar x=\tilde x$ from now on.
By using \eqref{ineq-last} in Proposition \ref{prop:2} we know the nonnegative sequence $\{\Phi_k\}$ is monotonically nonincreasing, and
\begin{equation}\label{phi2}
\lim_{k\to\infty}\Phi_{k}=\lim_{i\to\infty}\Phi_{k_{l_i}}=\lim_{i\to\infty}\big(\frac{1}{\tau\sigma}\|x_e^{k_{l_{i}}}\|^2+\phi_{k_{l_i}}\big)=\liminf_{k\to\infty}\phi_k.
\end{equation}
Since $\frac{1}{\tau\sigma}\|x_e^{k}\|^2=\Phi_k-\phi_k$, we have
\begin{equation}
\label{phi3}
\limsup_{k\to\infty}\frac{1}{\tau\sigma}\|x_e^{k}\|^2=\limsup_{k\to\infty}\{\Phi_k-\phi_k\}
\le
\limsup_{k\to\infty}\, \Phi_k-\liminf_{k\to\infty}\phi_k=0,
\end{equation}
which indicates that $\{x^k\}$ is a convergent sequence.

Second, we need to consider the case that $\tau\ge(1+\sqrt{5})/2$.
Define the nonnegative sequence $\{\psi_k\}$ by
$$
\psi_k:=
\|y_e^{k}\|_\cS^2
+\|z_e^k\|^2_{\T+\sigma\B\B^*},\quad \forall \, k\ge 0.
$$
From \eqref{ineq-large} we known that
$$
\Psi_k-\Psi_{k+1}\ge(1-\tau)\sigma\|\A^*y^{k+1}_e+\B^*z^{k+1}_e\|^2,
$$
which, together with \eqref{sq2}, Lemma \ref{lemma:sq-sum} and the fact that $1-\tau<0$, implies that $\{\Psi_k\}$ is a convergent sequence.
As a result, by the definition of $\psi_k$ we know the sequence $\{\psi_{k}\}$ is nonnegative and bounded.
Then by choosing proper subsequences of $\{\psi_{k}\}$ and $\{x^k\}$ and repeating the previous analysis for getting \eqref{phi2} and \eqref{phi3} with $\phi_k$ and $\Phi_k$ being replaced by $\psi_k$ and $\Psi_k$, we can establish that
$\lim\limits_{k\to\infty}\Psi_{k}=\liminf\limits_{k\to\infty}\psi_k$
and
$\limsup\limits_{k\to\infty}\frac{1}{\tau\sigma}\|x_e^{k}\|^2=0$.
Hence, $\{x^k\}$ is also a convergent sequence.

\medskip
Now we study the convergence of the primal objective function value. One the one hand, since $(\bar x,\bar y,\bar z)$ is a saddle point to the Lagrangian function $\cL(\cdot)$ defined by \eqref{lagrangian}, we have for any $k\ge 1$, $\cL(\bar y,\bar z; \bar x)\le \cL(y^{k},z^{k};\bar x)$. This, together with  $\A^*\bar y+\B^*\bar z=c$, implies that for any $k\ge 1$,
\begin{equation}
\begin{array}{l}
\label{limit1}
f(\bar y)+g(\bar z)-\langle \bar x,\A^*y_e^{k}+\B^*z_e^{k}\rangle
\le f(y^{k})+g(z^{k}).
\end{array}
\end{equation}
On the other hand, from \eqref{opt_cond} and \eqref{def-sub-d} we know that
$$
\begin{array}{l}
f(y^{k})+\langle u^{k}, \bar y-y^{k}\rangle
\le f(\bar y)
\quad
\mbox{and}
\quad
g(z^{k})+\langle v^{k},\bar z-z^{k}\rangle
\le g(\bar z).
\end{array}
$$
By combining the above two inequalities together and using \eqref{uv} we can get
\begin{equation}
\label{limit2}
\begin{array}{l}
f(\bar y)+g(\bar z)
-\langle x^{k},\A^{*}y_e^{k}+\B^{*}z_e^{k}\rangle
-\langle \cS(y^{k}-y^{k-1}), y_e^{k}\rangle
\\[1mm]
-\langle \T(z^{k}-z^{k-1}),z_e^{k}\rangle
-\sigma\langle\B^{*}(z^{k-1}-z^{k}),\A^{*} y_e^{k}\rangle
\\[1mm]
-(1-\tau)\sigma\|\A^{*}y_e^{k}+\B^{*}z_e^{k}\|^2
\ge
f(y^{k})+g(z^{k}).
\end{array}
\end{equation}
Since the sequences in \eqref{sq1} are bounded, by using \eqref{sq2}, \eqref{sq3} and the fact that any nonnegative summable sequence should converge to zero we know the left-hand-sides of both \eqref{limit1} and \eqref{limit2} converge to $f(\bar y)+g(\bar z)$ when $k\to\infty$.
Consequently, $\lim\limits_{k\to\infty}\{f(y^{k})+g(z^{k})\}=f(\bar y)+g(\bar z)$ by the squeeze theorem. Thus, part (a) is proved.

\medskip
\noindent
{\bf (b)}
From \eqref{opt_cond} we konw that for any $k\ge 1$,
\begin{equation}
\label{bd-above}
f(y^{k})
\le  f(\bar y)-\langle u^{k},\bar y-y^{k}\rangle
= f(\bar y)-\langle u^{k},\bar y\rangle+\langle u^{k},y^{k}\rangle.
\end{equation}
On the one hand, from the boundedness of $\{u^k\}$ we know that the sequence $\{-\langle u^{k},\bar y\rangle\}$ is bounded. On the other hand, from \eqref{sq2}, \eqref{sq3} and the boundedness of the sequences in \eqref{sq1}, we can use

$$
\begin{array}{rl}
\langle
u^{k},y^k\rangle=&-\langle
x^{k},\A^*y^k\rangle
-(1-\tau)\sigma\langle\A^*y_e^{k}+\B^*z_e^{k}
,
\A^*y^k\rangle
\\[2mm]
&
-\sigma
\langle
\B^*(z^{k-1}-z^{k}),\A^*y^k\rangle
-
\langle\cS(y^{k}-y^{k-1})
,
y^k
\rangle
\end{array}
$$
to get the boundedness of the sequence $\{\langle u^{k},y^k\rangle\}$.
Hence, from \eqref{bd-above} we know the sequence $\{f(y^{k})\}$ is bounded from above.
From \eqref{kkt} we know
$$
f(y^{k})\ge f(\bar y)+\langle -\A \bar x, y^{k}-\bar y\rangle
=
f(\bar y)-\langle \bar x, \A^*y_e^{k}\rangle.
$$
which, together with the fact that the sequences in \eqref{sq1} are bounded, implies that $\{f(y^{k})\}$ is bounded from below.
Consequently, $\{f(y^{k})\}$ is a bounded sequence. By using similar approach, we can obtain that $\{g(z^{k})\}$ is also a bounded sequence.

Next, we prove the remaining part of (b) by contradiction.
Suppose that Assumption \ref{as-2} holds and the sequence $\{y^{k}\}$ is unbounded.
Note that the sequence $\{y^{k}/(1+\|y^{k}\|)\}$ is always bounded. Thus it must have a subsequence $\{y^{k_{j}}/(1+\|y^{k_{j}}\|)\}_{j\ge 0}$, with $\{\|y^{k_{j}}\|\}$ being unbounded and non-decreasing, converging to a certain point $\xi\in\Y$. From the boundedness of the sequences in \eqref{sq1} we know that $\{\A^{*} y^{k}\}$ and $\{\cS y^{k}\}$ are bounded.
Then we have
$$
\A^{*}\xi
=\A^{*}\Big(
\lim_{j\to\infty}
\frac{y^{k_{j}}}{1+\|y^{k_{j}}\|}
\Big)
=
\lim_{j\to\infty}
\frac{\A^{*}y^{k_{j}}}{1+\|y^{k_{j}}\|}=0.
$$
and, similarly, $\cS\xi=0$.
By noting  that $\|\xi\|=1$, one has  $\xi\in\{y\in\Y\, |\, y\neq 0, \A^*y=0, \cS y=0\}$.
On the other hand, define the sequence $\{d^{k_j}\}_{j\ge 0}$
by
$$
d^{k_j}:=\left(y^{k_{j}}/(1+\|y^{k_{j}}\|)\, ,\, f(y^{k_{j}})/(1+\|y^{k_{j}}\|)\right).
$$
From the boundedness of the sequence $\{f(y^{k_j})\}$ and the definition of $\xi$ we know that $\lim_{j\to\infty}d^{k_j}=(\xi,0)$.
Since $(y^{k_{j}},f(y^{k_{j}}))\in\epi(f)$, by \cite[Theorem 8.2]{rocbook} we know that $(\xi,0)$ is a recession direction of $\epi(f)$. Then from the fact that $\epi(f0^+)=0^+(\epi f)$ we know that $f0^+(\xi)\le 0$, which contradicts Assumption \ref{as-2}.
The boundedness of $\{z^{k}\}$ under Assumption \ref{as-22}  can be similarly proved. Thus, part (b) is proved.

\medskip
\noindent {\bf (c)}
Suppose that $(x^\infty,y^\infty,z^\infty)$ is an accumulation point of $\{(x^k,y^k,z^k)\}$.
Let $\{(x^{k_j},y^{k_j},z^{k_j})\}_{j\ge0}$ be a subsequence of $\{(x^k,y^k,z^k)\}$ which converges to $(x^\infty,y^\infty,z^\infty)$.
By taking limits in \eqref{opt_cond} along with $k_j$ for $j\to\infty$ and using \eqref{uv}, \eqref{sq2} and \eqref{sq3} we can see that
\begin{equation}
-\A x^{\infty}\in\partial f(y^\infty),
\quad
-\B x^{\infty}\in\partial g(z^{\infty})
\quad
\mbox{and}
\quad
\A^*y^\infty+\B^*z^\infty=c,
\end{equation}
which can imply that $(x^\infty,y^\infty,z^\infty)$ is a solution to the KKT system \eqref{kkt}.
Now, without lose of generality we reset $(\bar x,\bar y,\bar z)=(x^\infty,y^\infty,z^\infty)$.
Then, by part (a) we know that  the sequence  $\{\Phi_{k}\}$ defined in \eqref{uv} converges to zero if $\tau\in(0,(1+\sqrt5)/2)$,
and the sequence $\{\Psi_{k}\}$ defined in \eqref{uv} converges to zero if $\tau\ge(1+\sqrt5)/2$ but $\sum_{k=0}^{\infty}\|x^{k+1}-x^{k}\|^{2}<+\infty$.
Thus, we always have
\begin{equation}
\label{unique}
\lim_{k\to\infty}\|y_{e}^{k}\|_{\cS+\Sigma_{f}}=0
\quad
\mbox{and}
\quad
\lim_{k\to\infty}\|z_{e}^{k}\|_{\T+\sigma\B\B^{*}+\Sigma_{g}}=0.
\end{equation}
As a result, it holds that
$\B^{*} z^{k}\to \B^{*} z^{\infty}$,
$(\Sigma_f +\cS) y^{k}\to (\Sigma_f +\cS) y^{\infty}$
and
$(\Sigma_g +\T) z^{k} \to (\Sigma_g + \T) z^{\infty}$ as $k\to\infty$. Moreover, by
using the fact that $\A^{*}y^{k}=(\A^{*}y^{k}+\B^{*}z^{k})-\B^{*}z^{k}$ and $ \A^{*}y^{k}+\B^{*}z^{k} \to \A^{*}y^{\infty}+\B^{*}z^{\infty}=c$ as $k\to \infty$, we can get $ \A^{*} y^{k}  \to \A^{*} y^{\infty}$ as $k\to \infty$. This completes the proof of part (c).

\medskip
\noindent {\bf (d)}
If $\Sigma_f+\cS+\A\A^*\succ 0$ and $\Sigma_g+\T+\B\B^*\succ 0$, the subproblems in the ADMM scheme \eqref{admm} are strongly convex, hence each of them has a unique optimal solution. Then, by part (c) we know that $\{y^k\}$ and  $\{z^k\}$ are convergent. Note that $\{x^k\}$ is convergent by part (a).
Therefore, by part (c) we know that $\{(x^k,y^k,z^k)\}$ converges to a solution to the KKT system \eqref{kkt}.
Hence, part (d) is proved and this completes the proof of the theorem.
\qed
\end{proof}

Before concluding this note, we make the following remarks on the convergence results presented in Theorem
\ref{theorem:c1}.
\begin{remark}
The corresponding results in part (a) of Theorem \ref{theorem:c1} for the ADMM scheme (\ref{admm}) with $\tau=1$ have been stated  in Boyd et al. \cite{boyd11}. However,  as indicated by the counterexample constructed in Section \ref{sec:example},  the proofs in \cite{boyd11} need to be revised with proper  additional assumptions. Actually, no proof on the convergence of $\{x^k\}$ has  been
given in \cite{boyd11} at all. Nevertheless,  one may view the results in part (a) as   extensions of those in  Boyd et al. \cite{boyd11} for the ADMM scheme (\ref{admm}) with $\tau =1$  to a computationally more attractive sPADMM scheme  \eqref{spadmm} with a  rigorous proof.
The condition that $\Sigma_{f}+\A\A^{*}+\cS\succ 0$ and
$\Sigma_{g}+\B\B^{*}+\T\succ 0$ in part (d) was firstly proposed by Fazel et al. \cite{fazel13}.
\end{remark}

\begin{remark}
Note that, numerically, the boundedness of the sequences generated by a certain algorithm is a desirable property and
Assumptions \ref{as-2} and \ref{as-22} can furnish this purpose.
Assumption \ref{as-2}
is pretty mild in the sense that it holds automatically, even if $\cS=0$, for many  practical problems  where $f$ has bounded  level sets. Of course, the same comment can be applied to Assumption \ref{as-22}. \end{remark}

\begin{remark}
The sufficient condition that $\tau\ge(1+\sqrt{5})/2$ but $\sum_{k=1}^\infty\|x^{k+1}-x^{k}\|^2<+\infty$ simplifies
the condition proposed by Sun et al.\footnote{
The condition that $\tau\ge(1+\sqrt{5})/2$ but $\sum_{k=1}^\infty\{\|\B^*(z^{k+1}-z^k)\|^2+\sigma\|x^{k+1}-x^{k}\|^{2}\}<+\infty$ was used in \cite[Theorem 2.2]{yang2015}.
} \cite{yang2015}
for the purpose of achieving  better numerical performance.
The advantage of taking the step-length $\tau\ge(1+\sqrt{5})/2$ has been observed in \cite{chenl2015,lixd2014,yang2015} for solving high-dimensional linear and convex quadratic semi-definite programming problems. In numerical computations, one can start with a larger $\tau$, e.g. $\tau=1.95$, and reset it as $\tau:=\max(\gamma\tau,1.618)$ for some $\gamma\in(0,1)$, e.g. $\gamma =0.95$,  if at the $k$-th iteration one observes that
$\|x^{k+1}-x^{k}\|^2>c_0/k^{1.2}$ for some given positive constant $c_0>0$. Since $\tau$ can be reset at most a finite number of times, our convergence analysis is valid for such a strategy.
One may refer to \cite[Remark 2.3]{yang2015} for more discussions on this computational issue.
\end{remark}

\section{Conclusions}
\label{sec:conclusion}
In this note, we have constructed a  simple example possessing several nice properties    to illustrate  that      the convergence theorem of    the ADMM scheme (\ref{admm})  stated in  Boyd et al. \cite{boyd11} can be false if no prior condition that guarantees the existence of solutions to all the subproblems involved is made.  In order to correct this mistake we have presented fairly mild conditions under which all the subproblems are solvable by using standard knowledge in convex analysis. Based on these conditions, we have further  conducted  the convergence analysis of the ADMM under a more general and useful sPADMM setting, which has the  the flexibility of allowing  the users to choose proper proximal terms to guarantee the existence of solutions to  the subproblems. In particular, we have established some satisfactory convergence properties of the sPADMM with a computationally more attractive large step-length that can exceed the golden ratio of 1.618. In conclusion,  this note has (i) clarified some confusions on the convergence results of the popular ADMM; (ii) opened the potential for designing computationally more efficient ADMM-type solvers in the future.

\end{document}